\newtheorem{theorem}{Theorem}
\newtheorem{theoreme}{Theorem}
\newtheorem{cor}[theorem]{Corollary}
\newtheorem{lem}[theoreme]{Lemma}
\newtheorem{prop}[theorem]{Proposition}
\newcommand\bib[1]{\bibitem[#1]{#1}}
\newcommand\1{{\bf 1}}
\renewcommand\a{\alpha}
\renewcommand\b{\beta}
\newcommand\C{{\mathbb C}}
\renewcommand\d{\delta}
\newcommand\D{{\mathcal D}}
\newcommand\E{\mathcal{E}}
\newcommand\g{\mathfrak{g}}
\newcommand\h{\mathfrak{h}}
\renewcommand\l{\lambda}
\newcommand\La{\Lambda}
\newcommand\m{\frak m}
\newcommand\oo{\omega}
\newcommand\op[1]{\mathop{\rm #1}\nolimits}
\newcommand\ot{\otimes}
\newcommand\p{\partial}
\newcommand\R{{\mathbb R}}
\renewcommand\t{\tau}
\newcommand\vp{\varphi}
\newcommand\z{\sigma}
\newcommand\Z{{\mathbb Z}}
\newcommand{\comm}[1]{}
\begin{document}

 \title[The gap phenomenon for finite type systems]{The gap phenomenon in the dimension study of finite type systems}
 \author{Boris Kruglikov}
 \date{}
 \address{Department of Mathematics and Statistics, University of Troms\o, Troms\o\ 90-37, Norway. \quad
E-mail: boris.kruglikov@uit.no.}
 \keywords{Overdetermined systems, compatibility, symmetric models, polynomial integrals, Tanaka algebra.}

 \vspace{-14.5pt}
 \begin{abstract}
In this paper several examples of gaps (lacunes) between dimensions of maximal and submaximal
symmetric models are considered, which include investigation of number of independent linear and
quadratic integrals of metrics and counting the symmetries of geometric structures and differential
equations. A general result clarifying this effect in the case, when the structure is associated
to a vector distribution, is proposed.
 \end{abstract}

 \maketitle

\section*{Introduction}

Finite type systems in differential geometry and differential equations are systems
whose solution spaces are finite-dimensional. They are encoded by integrable vector distributions
on manifolds. Solutions of the latter are integral surfaces, which by Frobenius theorem
can be found through integration of ODEs.

Several enumeration problems arise in this respect. The most important are counting of symmetries
and counting of the first integrals. Indeed, these object allow to solve the
differential equations effectively. The larger is their amount, the easier is the integration.

Notice that symmetries of differential equations can be of different types: internal, external,
higher etc. Lie-B\"acklund type theorems in some cases identify those, but in general their
spaces have different sizes. Existence of symmetries is a constraint on the system.

The models with the maximal amount of symmetries, conservations laws etc are of special
significance. Usually sub-maximal symmetric models play an important role in applications too.

The setup for the enumeration problem can be: micro-local
(the objects are defined locally in the space of jets), local (with respect to the manifold) or global.
While the first two cases are often the same (provided we impose the requirement of algebraic behavior
for the solutions, e.g. considering polynomial in momenta first integrals),
the latter two are always different: the global existence can have strong topological implications.

For instance, the number of Killing vector fields locally on 2-disk $D^2$ can be only 0, 1 or 3,
but globally on $T^2$ it is 0, 1 or 2. Similarly, the number of quadratic integrals locally
is 1, 2, 3, 4 or 6 \cite{D}, but globally it can be 2 or 6 on $S^2$ and 2 or 3 on $T^2$ \cite{Kol,Ki}.
In any case the maximal number means constant curvature.

We will restrict exclusively to local problems, so all differential equations and their solutions
are defined locally near a regular point.

The following phenomenon is known classically: the maximal and the next after it (sub-maximal)
amounts of the objects we count (symmetries, integrals etc) usually have a gap between them.

For instance the maximal amount of point transformations of 2nd order scalar ODEs is 8
(trivializable ODEs with $\op{sl}_3$ algebra of symmetries), elsewise there are no more than 3
symmetries according to A. Tresse classification \cite{Tr,K$_2$}.

In some cases this gap (we stick to usage of this term, resembling "spectral gap";
in Russian tradition the word "lacune" is adopted) can be explained by the structure theory
of simple Lie groups, but in some others the algebraic explanation is lacking.

In this paper we will demonstrate the gap phenomenon in a number of examples.
In the particular case, when the geometric structures are encoded by vector distributions,
we remark that the maximal symmetric model is unique and the maximal symmetry algebra is graded.
Then we give a tool to control the gap. Namely, using recent advances in the Tanaka theory, we
clarify the behavior of symmetric models and we relate this to algebraic and homological methods.

In general the reason for appearance of the gap and its size remains an interesting open problem.

The paper is organized as follows. In Section \ref{S1} we recall the general setup from geometric
theory of differential equations, giving the method to measure dimensional characteristics we discuss.
In Section \ref {S2a} we discuss the automorphisms of geometric structures. Most results here are classical.
In Section \ref{S2} we consider the problem of integrability of geodesic flows, and present old and
new knowledge on the number of second and higher order integrals.

Then in Section \ref{S3} we recall basics from the Tanaka theory together with a recent progress,
and formulate a method to find sub-maximal symmetric structures. This will be applied in Section \ref{S4}
to the geometry of rank 2 distributions in manifolds of dimension 5 and 6. The former is known
since E. Cartan \cite{C} and we give a new interpretation of his equations,
while the latter will contain new results and we sketch their higher-dimensional analogs.

We finish with conclusion, summarizing the results of the paper.

\medskip

\textsc{Acknowledgment.} This research was partially supported by the grant DAAD project Nr. 208068.
I would like to thank Vladimir Matveev and other organizers of the conference
"Finite Dimensional Integrable Systems in Geometry and Mathematical Physics 2011" in Jena for hospitality.
Some calculations in the last section were performed using
\textsc{Maple} package \textsl{DifferentialGeometry}.

\section{Solution space of differential equations}\label{S1}

The systems discussed in the Introduction can be re-cast into counting dimensions of
the solutions spaces for certain (systems of) differential equations. For our purposes we
can restrict to linear PDEs.

A differential equation $\E$ is represented geometrically as a submanifold in the space
of jets $J^k(M,N)$, where $M$ is the space of independent variables $x=(x^1,\dots,x^n)$ and
$N$ is the space of dependent variables $u=(u^1,\dots,u^m)$.
A choice of these coordinates on $J^0=M\times N$ yields canonical coordinates on the space $J^k$:
$(u^j_\z)$, where multi-index $\z=(i_1,\dots,i_n)$ has length $|\z|=\sum_{a=1}^n i_a\le k$.

A system $\E$ of differential equations on $u=u(x)$ can be considered as a submanifold in jets.
We will assume regularity, i.e. the projection maps $\pi_{k,i}:J^k\to J^i$
have constant ranks when restricted to $\E$ and its prolongations.
If $\E$ is given by differential equations $F_j=0$ of orders $k_j$, then the prolongation
to jet-order $l$ is given as the locus $\E_l\subset J^l(M,N)$ of equations $\D_\z F_j=0$, where
$\D_\z$ denotes the total derivative by multi-indices $\z$, and the maximal
amount of differentiations is $|\z|\le l-k_j$.

The symbols of $\E$ are defined as $g_i=\op{Ker}(d\pi_{i,i-1}:T\E_i\to T\E_{i-1})$.
If $x_i\in\E_i$ and $\pi_i(x_i)=x\in M$, we can identify $g_i(x_i)\subset S^iT^*_xM\ot T_xN$.

In general $\dim g_i$ as a function of $i$ grows in accordance with the Hilbert polynomial \cite{KL},
that can be calculated by the characteristic variety of $\E$.
System $\E$ is said to be of finite type if $g_i(x)=0$ for large $i$ and all $x$ (this holds iff
the complex characteristic variety is empty).

In other words, $\E$ is of finite type if for some jet-level $\pi_{l,l-1}:\E_l\to\E_{l-1}$
is a diffeomorphism, i.e. all the jets of $u$ of order $l$ are expressed through
the lower order jets. This uses regularity assumptions; if we relax the latter then the
conclusion holds on an open dense set (but $g_i(x)=0$ for large $i$ should hold uniformly in $x\in M$).
In this case the solution space can be parametrized by the Cauchy data.

 \begin{theorem}\label{Thm1}
Dimension of the solution space of a finite type system $\E$ is bounded as follows (the sum is finite)
 \begin{equation}\label{ineq}
\dim\op{Sol}(\E)\le\sup_M\sum_i\dim g_i(x).
 \end{equation}
 \end{theorem}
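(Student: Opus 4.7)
The plan is to exploit the finite type assumption to show that a solution is determined by finitely many of its jets at a single point, and then to express the dimension of the relevant jet fiber as the telescoping sum of the symbol dimensions $\dim g_i$.

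First, by the finite type hypothesis and regularity, pick $l$ with $g_l(x) = 0$ for all $x$, so that $\pi_{l,l-1}\colon\E_l\to\E_{l-1}$ is a diffeomorphism. Consequently every differentiation $\D_\z F_j$ applied to a local solution $u$ at a point $x_0\in M$ is forced by its $(l-1)$-jet: the jet $j^l_{x_0}u$ is a function of $j^{l-1}_{x_0}u$, and inductively every $j^{l+r}_{x_0}u$ is determined. I would then argue that the evaluation map
\[
j^{l-1}_{x_0}\colon \op{Sol}(\E)\longrightarrow \pi_{l-1}^{-1}(x_0)\cap \E_{l-1}
\]
is injective. In the analytic category this is immediate from the Taylor series. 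In the smooth category, one observes that on $\E_{l-1}\simeq\E_l$ the restricted Cartan distribution has rank $n=\dim M$ and its integral manifolds projecting diffeomorphically to $M$ are precisely the $(l-1)$-graphs of solutions; uniqueness of such integral manifolds through a point then yields injectivity.

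Next, the dimension of the target fiber is computed by induction on $i$ from the short exact sequences
\[
0\longrightarrow g_i(x_i)\longrightarrow T_{x_i}\bigl(\pi_i^{-1}(x)\cap\E_i\bigr)\xrightarrow{\;d\pi_{i,i-1}\;} T_{x_{i-1}}\bigl(\pi_{i-1}^{-1}(x)\cap\E_{i-1}\bigr)\longrightarrow 0,
\]
which come from the very definition $g_i=\op{Ker}(d\pi_{i,i-1}|_{T\E_i})$ and the assumed constancy of ranks. Telescoping from $i=0$ up to $i=l-1$ gives
\[
\dim\bigl(\pi_{l-1}^{-1}(x)\cap\E_{l-1}\bigr)=\sum_{i=0}^{l-1}\dim g_i(x)=\sum_{i\ge 0}\dim g_i(x),
\]
the last equality using $g_i=0$ for $i\ge l$.

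Combining the two steps, one obtains $\dim\op{Sol}(\E)\le\dim\bigl(\pi_{l-1}^{-1}(x_0)\cap\E_{l-1}\bigr)$ for every fixed $x_0$; taking the supremum over $x_0\in M$ yields the claimed inequality \eqref{ineq}. The main obstacle is the injectivity step: the bound uses only that two solutions sharing an $(l-1)$-jet at one point must coincide, and care is needed in the non-analytic setting because the induced distribution on $\E_{l-1}$ need not be Frobenius-integrable. For the inequality it suffices that integral manifolds, when they exist, are locally unique through each point, which follows from the determined nature of the prolonged system together with the ODE uniqueness theorem along curves in $M$.
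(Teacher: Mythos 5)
The paper itself contains no proof of Theorem \ref{Thm1}: it is declared a folklore classical result in the regular case and the general case is deferred to \cite{K$_3$}. Your argument is exactly that folklore proof, and it is correct under the standing regularity assumption of Section \ref{S1} (constant ranks of all projections restricted to $\E$ and its prolongations). Two precisions are worth making. First, $g_l=0$ makes $\pi_{l,l-1}\colon\E_l\to\E_{l-1}$ an injective immersion, i.e.\ a diffeomorphism onto its \emph{image} rather than onto $\E_{l-1}$; surjectivity is precisely the compatibility needed for equality in (\ref{ineq}) and must not be assumed here. For the same reason your short exact sequences may degenerate when $\E_i\to\E_{i-1}$ fails to be fibrewise onto, leaving only the inequality $\dim\bigl(\pi_i^{-1}(x)\cap\E_i\bigr)\le\dim g_i(x)+\dim\bigl(\pi_{i-1}^{-1}(x)\cap\E_{i-1}\bigr)$ --- but this goes in the favourable direction, so the telescoping estimate survives. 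Second, your injectivity step is sound and is the real content of the theorem: on the (regular stratum of the) image of the prolongations in $\E_l$ the finite type condition produces a rank-$n$ horizontal distribution whose integral manifolds projecting diffeomorphically to $M$ are the prolonged solution graphs, and uniqueness of such an integral manifold through a point follows from ODE uniqueness along curves in $M$ even when the distribution is not involutive. Note that your argument in fact yields the sharper bound with $\inf$ in place of $\sup$, as the paper remarks is possible; what it does not cover is the genuinely non-regular situation where $\dim g_i(x)$ jumps, which is the actual subject of \cite{K$_3$} and lies outside the statement's standing hypotheses.
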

\noindent
For the regular case, when $\dim g_i(x)$ are constants, this is a folklore classical result;
for the general case we refer to \cite{K$_3$} (it is possible to use $\inf$ instead of $\sup$ in
the right hand side; the result also holds for analytic $\E$ which have finite type almost everywhere).

Equality in (\ref{ineq}) is attained only in the case $\E$ is formally integrable, i.e.
$\E$ is regular (this assumption we adopt from now on) and
$\pi_{i,i-1}:\E_i\to\E_{i-1}$ is a vector bundle. The latter means that all compatibility
conditions of $\E$ hold true.

Notice that if $\E$ is compatible, the symbols of the prolongations $g_l$ can be calculated
from the symbols of the equation $g_k$ by purely algebraic prolongations
$g_l=g_k^{(l-k)}=g_k\ot S^{l-k}T^*_xM\cap S^lT^*_xM\ot T_xN$, but in general (non-integrable)
case the algebraic prolongations can exceed the actual symbol spaces.

Consider a family of equations $\E_\alpha$ depending on a numeric or functional parameter $\alpha$.
The parametrization will be assumed algebraic-differential.
For instance $\E$ can be the Lie equation describing symmetries of a geometric
object and $\a$ can parametrize these objects (like Riemannian metrics or distributions
of prescribed dimensions/type). We will assume that the symbol spaces $g_i$ calculated
algebraically have the same dimensions independent of the parameter $\a$.

In this paper we we would like to understand what are the maximal dimensions $\dim\op{Sol}(\E_\a)$ and
what are the respective structures. This corresponds to the parameters $\a$ for which $\E_\a$
is formally (and so by finite type condition also locally) integrable.

It turns out that the possible dimensions of $\dim\op{Sol}(\E_\a)$ usually have a gap between
the maximal value and the next one. The basic reason behind this is the following.
Non-maximality means that some compatibility conditions fail, and so new equations
shall be added to the collection $\{F_j=0\}$. This reduces dimension of some symbol space $g_i$
and its prolongations, and so the right hand side of (\ref{ineq}) decreases.

We are going to demonstrate this effect with examples.

\section{Automorphisms of classical geometric structures}\label{S2a}

{\bf Riemannian structures\/} or pseudo-Riemannian structures on a manifold $M$
always have finite-dimensional isometry groups, which are solutions to
the Lie equation $\E=\{[\vp]^1_x:d_x\vp(g)=g\}\subset J^1(M,M)$.
The corresponding Lie algebra consists of vector fields $\xi$ satisfying $L_\xi(g)=0$.
The symbols of this equation are (we  identify $TM\simeq T^*M$ via $g$)
 $$
g_0=T_xM,\ g_1=\La^2T_x^*M,\ g_2=g_1^{(1)}=0,
 $$
whence $\dim g_0=n$, $\dim g_1=\frac{n(n-1)}2$.
Thus the system has finite type on the level of 2-jets, $\pi_{2,1}:\E_2\stackrel\sim\to\E_1$,
and the maximal dimension of the solution space is
 $$
\mathfrak{S}_\text{max}=\sum\dim g_i=\frac{n(n+1)}2.
 $$

This is realized precisely when $\E$ is compatible, which corresponds to the spacial forms
(constant sectional curvature), so that locally $M$ is round sphere $S^n$, Euclidean space $\R^n$ or
the Lobachevsky space $L^n$. The symmetry algebras are resp. $\mathfrak{so}(n+1)$,
$\mathfrak{so}(n)\ltimes\R^n$ or $\mathfrak{so}(1,n)$.

Let us discuss the submaximal dimension in the Riemannian case.
The stabilizer of a point $x_0$ is a subalgebra in $\mathfrak{so}(n)$. If it is full,
the translational part at $x_0$ should be either of dimension $n$ or zero.
In the first case we get the constant curvature, in the second - surface of revolution.
If the isotropy group is proper, then (as maximal) it is conjugated to $\mathfrak{so}(n-1)$,
provided $n\ne4$. Thus the metric again gives the surface of revolution $ds^2=dr^2+f(r)^2ds^2_{n-1}$.
Translation part is full only if the sectional curvature for $ds^2_{n-1}$ is constant and
to exclude the maximal case, we suppose the curvature is non-zero. The gap is thus $(n-1)$ for $n\ne2,4$,
i.e. the sub-maximal symmetry algebra of a Riemannian metric $g$ has dimension
 $$
\mathfrak{S}_\text{sub.max}=\frac{n(n-1)}2+1.
 $$
The sub-maximal homogeneous Riemannian spaces were described by
K.Yano, M.Obata and N.Kuiper for $n\ge4$, see ref. in \cite{Ko,YK},
and by E.Cartan in dimension 3 and I.Egorov in dimension 4 \cite{E$_1$}.

In dimension 2 the submaximal case has dimension 1 (this result belongs to
Darboux and K\"onings \cite{D}), and in dimension 4 the submaximal case has dimension 8
-- in both cases the gap is 2  \cite{E$_2$}.

In pseudo-Riemannian case the situation is more complicated. The maximal case still
corresponds to the manifold of constant sectional curvature $M^n(c)$, i.e. locally Minkovsky or Wolf space.
The submaximal case has the same dimension of the symmetry algebra in the case of Lorenzian signature
and $n\ne3,5$, but the amount of submaximal cases is greater.
According to \cite{P} for $n>5$, $n\ne7$ locally they are:
products $M^{n-1}(c)\times\R$, $\epsilon$-spaces and Egorov spaces.

\smallskip

{\bf Affine structure\/} is given by a choice of affine connection on $M$.
The Lie equation for symmetries $\E\subset J^2(M,M)$ at some point $x_2\in\E$
with $\pi_2(x_2)=x$ has symbols $g_0=T_xM$, $g_1=T_x^*M\ot T_xM$, $g_2=0$.

Thus the symmetry algebra of the affine connection has dimension at most
 $$
\mathfrak{S}_\text{max}=n+n^2.
 $$
The equality is attained iff $\E$ is compatible, and this happens precisely when $M$
is the usual affine space with flat connection \cite{Ei}.
The submaximal case for $n\ge4$ has dimension
 $$
\mathfrak{S}_\text{sub.max}=n^2.
 $$

\smallskip

{\bf Almost complex structure\/} is a field of operators $J$ on $TM$ satisfying $J^2=-\1$.
If the Nijenhuis tensor $N_J$ vanishes, the structure is integrable and the local symmetry
pseudogroup is infinite-dimensional. To guarantee finite-dimensionality one imposes a non-degeneracy
condition on the Nijenhuis tensor \cite{K$_4$}. If $n=\dim_\C M>2$, this says that the image
$N_J(\La^2TM)$ is the whole $TM$. For $n=2$ it is enough to claim that the image be a non-holonomic
rank 2 distribution.

Provided the dimension of the symmetry group is finite what is its maximal value? We know the
definite answer for $n=3$. According to \cite{K$_4$} the maximal symmetric non-degenerate
almost complex structure is the Calabi structure $J$ on $S^6$, the symmetry group being $G_2$.
We do not know the submaximal model for certain, but expect its symmetry algebra to be 9-dimensional
(then the gap will be equal to 5).

\smallskip

{\bf Conformal structure\/} is given by a choice of conformal class of (pseudo-)Riemannian metric.
The Lie equation $\E\subset J^1(M,M)$ for $n>2$ has symbols $g_0=T_xM$,
$g_1=\op{co}(n)=\R\oplus\mathfrak{so}(n)\simeq\R\oplus\La^2T^*_xM$, $g_2=T_x^*M$ and $g_3=0$.
Thus
 $$
\mathfrak{S}_\text{max}=\frac{(n+1)(n+2)}2.
 $$
Equality is attained in conformally flat case (loc. Euclidean space $\R^n$).
The same is true for the signature $(p,n-p)$\footnote{Added in proof:
after discussion with M.Dunajski and K.Melnik it became clear that the sub-maximal model in dimension
$n=4$ and neutral signature is given by the so-called pp-waves metric. This is the anti-self-dual twist free,
constant curvature solution of the Einstein equations \cite{Du}. Its symmetry algebra has dimension 9,
and so the gap is 6 in this case.}.

\smallskip

{\bf Projective structure\/} can be given as a projective class of torsion free connection on $M$.
The symbols of its Lie equation are $g_0=T_xM$, $g_1=T_x^*M\ot T_xM$, $g_2=T_x^*M$ and $g_3=0$.
Thus
 $$
\mathfrak{S}_\text{max}=n^2+2n.
 $$
Equality is attained in projectively flat case (loc. $\R\mathbb{P}^n$), see \cite{Ko}.
We do not know submaximal cases for conformal and projective structures.

\smallskip

Closely related to projective equivalences are geodesic equivalences between Riemannian
manifolds. The counting function for the latter -- the {\bf degree of geodesic mobility} --
has lacunarity similar to that in the theory of motion. See \cite{KM$^2$S} and the references
therein for complete description of gaps in the distribution of the degree of
geodesic mobility. The maximal and sub-maximal degrees of geodesic mobility for $n>2$ are
 $$
\mathfrak{S}_\text{max}=\frac{(n+1)(n+2)}2,\quad \mathfrak{S}_\text{sub.max}=\frac{(n-1)(n-2)}2+1.
 $$
Thus the gap is $3n-1$. The dimension of concircular fields, which are projective analogs of
the Killing fileds, have gap 3 as for them
 $$
\mathfrak{S}_\text{max}=n+1,\quad \mathfrak{S}_\text{sub.max}=n-2.
 $$

That degree of geodesic mobility exceeds 2 is closely related to integrability of geodesic flows
\cite{MT}, which is discussed in the next section (in particular for $n=2$, when the
degree of geodesic mobility equals $\dim Q_2$, see (\ref{Q}), we observe that the maximal
case has the same dimension as prescribed above, namely $\mathfrak{S}_\text{max}=6$,
while the sub-maximal case has dimension $\mathfrak{S}_\text{sub.max}=4$).

\section{Integrals of geodesic flows}\label{S2}

Let $(M,g)$ be a Riemannian or pseudo-Riemannian manifold. The geodesic flow of $g$
is the dynamical system on $T^*M$ (with the canonical symplectic form) given by
the Hamiltonian $H=\frac12\|p\|^2_g=\frac12g^{ij}(x)p_ip_j$. Its integrability via functions analytic in momenta
is equivalent, by Whittaker's theorem, to existence of polynomial in momenta integrals.
Since $H$ is quadratic we can restrict to integrals of pure degree $d$.

Consider the vector space ($\{,\}$ denotes the standard Poisson bracket)
 \begin{equation}\label{Q}
Q_d(g)=\{F\in C^\infty(T^*M):\{H,F\}=0\text{ and }\deg F=d\}.
 \end{equation}
We will be interested in its dimension depending on $g$. The whole algebra of polynomial integrals
$Q(g)=\oplus Q_d(g)$ is graded and Poisson, satisfying $\{Q_k,Q_l\}\subset Q_{k+l-1}$.

\medskip

{\bf Linear integrals}, also known as Killing vector fields, have the form $F=u^k(x)p_k$.
The coefficients of the quadratic expression $\{H,F\}=0$ give a system $\E$ of
$\frac{n(n+1)}2$ equations on $n$ unknown functions. The symbols of $\E$ have the following
dimensions (interpreted as the number of free jets, or as the amount of Cauchy data):
 $$
\dim g_0=n,\quad \dim g_1=n^2-\frac{n(n+1)}2=\frac{n(n-1)}2,\quad \dim g_2=0.
 $$
Its easy to see that this is the Lie equation for the Riemannian metric $g$ -- the case discussed
in \S\ref{S2a}.

Solution to this $\E$ gives the Lie algebra $\op{sym}(g)=Q_1(g)$. The other spaces $Q_d(g)$
are not Lie algebras, but are modules over $Q_1(g)$.

\medskip

{\bf Quadratic integrals}, also known as Killing-St\"ackel 2-tensors,
have the form $F=u^{kl}(x)p_kp_l$. The coefficients of the cubic
expression $\{H,F\}=0$ give a system $\E$ of $\frac{n(n+1)(n+2)}6$ equations on
$\dim g_0=\frac{n(n+1)}2$ unknowns. The higher symbols of $\E$ have the following dimensions:
 $$
\dim g_1=\frac{n(n^2-1)}3,\quad \dim g_2=\frac{n^2(n^2-1)}{12},
\quad \dim g_3=0.
 $$
Thus the system has finite type on the level of 3-jets, $\pi_{3,2}:\E_3\stackrel\sim\to\E_2$,
and the maximal dimension of the solution space is
 $$
\mathfrak{S}_\text{max}=\sum\dim g_i=\frac{(n+1)^2((n+1)^2-1)}{12}.
 $$
This is realized precisely when $\E$ is compatible, which corresponds to the spacial forms
(it works also for the pseudo-Riemannian case).

In fact, the maximal space of quadratic integrals $Q_2=S^2Q_1$.
This means that quadratic integrals are linear combinations of the products of the
linear ones. For instance, for the flat case $Q_1=\langle p_k,r_{ij}=x_ip_j-x_jp_i\rangle$,
so $Q_2$ is generated by $g_0=\langle p_ip_j\rangle$,
$g_1=\langle L_{ijk}=r_{ij}p_k\rangle$ (relations are $L_{ijk}=-L_{jik}$ and $L_{ijk}+L_{jki}+L_{kij}=0$)
and $g_2=\langle R_{ijkl}=r_{ij}r_{kl}\rangle$ (relations are the same as for algebraic curvature tensors).

 \begin{prop}
Submaximal symmetric Riemannian metrics have dimension of the solution space $Q_2$
given by
 $$
\mathfrak{S}_\text{sub.max}\ge\frac{n(n+1)}2+\frac{n^2(n^2-1)}{12}.
 $$
 \end{prop}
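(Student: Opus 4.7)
The plan is to deduce the bound from Theorem \ref{Thm1} by exhibiting a non-constant-curvature Riemannian metric whose space of Killing 2-tensors has dimension at least $\frac{n(n+1)}{2}+\frac{n^2(n^2-1)}{12}$.

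First I would observe the arithmetic identity
$\mathfrak{S}_\text{max}-\bigl(\frac{n(n+1)}{2}+\frac{n^2(n^2-1)}{12}\bigr)=\dim g_1=\frac{n(n^2-1)}{3}$,
so the target equals $\dim g_0+\dim g_2$, i.e.\ $\mathfrak{S}_\text{max}$ minus the middle (hook--$[2,1]$) symbol component of the Killing tensor equation. Since by Theorem \ref{Thm1} one has $\dim Q_2(g)\le\sum_i\dim g_i(x)$, with equality when $\E$ is formally integrable, the proposition reduces to producing a submaximal metric for which the failure of compatibility reduces $\sum\dim g_i(x)$ by no more than $\dim g_1$ relative to the maximal model.

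Next I would construct such a model, the natural candidates being rank-$\ge 2$ symmetric spaces: the product $(M_1,h_1)\times(M_2,h_2)$ of two constant-curvature factors of differing sectional curvatures (with $\dim M_i=n_i$, $n_1+n_2=n$), or, for even $n$, the complex projective space $\mathbb{C}P^{n/2}$ with the Fubini--Study metric. For such a model $Q_2(g)$ admits an algebraic description via the isometry group: in the product case it contains the pullbacks $Q_2(h_1)\oplus Q_2(h_2)$, the cross products $Q_1(h_1)\otimes Q_1(h_2)$, and the symmetric squares $S^2Q_1(h_i)\subset Q_2(h_i)$; in the homogeneous case it corresponds to $\mathrm{Ad}_H$-invariants in $S^2\g$ for $G/H$. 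The branching of $\mathrm{O}(n)$-representations under the stabiliser then reassembles, piece by piece, the components $g_0$ and $g_2$ of the ambient symbol, while the $g_1$-component is precisely what dies due to the mismatch in the curvature directions. As a sharpness check, for $n=2$ the formula gives $4$, matching the known value $\mathfrak{S}_\text{sub.max}=4$ realised by the Liouville/Koenigs surface.

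The principal obstacle is the representation-theoretic dimension count: one must carefully enumerate the independent Killing 2-tensors of the model, track overlaps between $S^2Q_1(h_i)$ and $Q_2(h_i)$, and verify via branching that the surviving $\mathrm{O}(n_1)\times\mathrm{O}(n_2)$-pieces exhaust $\dim g_0+\dim g_2$ after reduction to $\mathrm{O}(n)$. If no partition $(n_1,n_2)$ of a product suffices for some $n$, a Stäckel-- or Kiyohara-type multi-Liouville construction generalising the two-dimensional sub-maximal example is substituted. Once the count is complete, Theorem \ref{Thm1} applied to the explicit metric yields the lower bound on $\mathfrak{S}_\text{sub.max}$, and non-maximality is immediate from the non-constancy of the sectional curvature.
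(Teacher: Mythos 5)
Your overall strategy --- exhibit an explicit non-maximal metric whose space of Killing 2-tensors has at least the stated dimension, then read off the lower bound --- is the same as the paper's, and your arithmetic observation that the target equals $\dim g_0+\dim g_2=\mathfrak{S}_\text{max}-\dim g_1$ is correct and suggestive. The genuine gap is that the concrete models you put forward do not realize the bound. The product $M^{n_1}(c_1)\times M^{n_2}(c_2)$ of constant-curvature factors falls short: for $n_1=1$ this is exactly the metric of Lemma \ref{L2}, for which the paper computes $\dim Q_2=1+\frac{n(n-1)}2+\frac{n^2(n^2-1)}{12}$, i.e.\ $n-1$ \emph{less} than required (the paper explicitly notes that metrics which are sub-maximal for $\dim Q_1$ fail to be sub-maximal for $\dim Q_2$); for balanced partitions the deficit is worse, e.g.\ for $n=4$, $n_1=n_2=2$ the reducible integrals give $\dim S^2Q_1=21$ against the target $30$. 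The $\mathbb{C}P^{n/2}$ suggestion covers only even $n$ and is left without any dimension count, so it cannot carry the proof as written. Your fallback --- a St\"ackel/multi-Liouville construction generalizing the two-dimensional sub-maximal example --- does point at the right family, but you never exhibit a member of it or compute its $Q_2$, and that computation is the entire content of the proof.

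The paper supplies precisely this missing step: it takes the conformally flat metric $g=x_1\sum_1^n dx_i^2$ (Lemma \ref{L1}) and integrates $\{H,F\}=0$ for $F=\sum b_{ij}p_ip_j$ directly, using the coefficients of $p_1^3$, $p_1^2p_i$, $p_1p_ip_j$ to express $b_{11},b_{1i},b_{ij}$ through an arbitrary inhomogeneous quadric $\b(x_2,\dots,x_n)$, functions $\z_i$ (which the remaining equations force to vanish) and a maximal Killing 2-tensor of the flat $(n-1)$-dimensional metric; the total $\frac{n(n+1)}2+\frac{n^2(n^2-1)}{12}$ then falls out of the bijection between inhomogeneous quadrics in $n-1$ variables and homogeneous quadrics in $n$ variables. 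Note also that your reduction to ``the failure of compatibility reduces $\sum\dim g_i$ by no more than $\dim g_1$'' cannot be extracted from Theorem \ref{Thm1}: inequality (\ref{ineq}) bounds $\dim Q_2$ only from above, so a lower bound unavoidably requires integrating the Killing equation on a concrete metric --- the part your proposal leaves open.
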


In fact we expect that the above is the submaximal number of quadratic integrals.
The claim follows from the following

 \begin{lem}\label{L1}
For the metric $g=x_1\sum_1^n dx_i^2$
 $$
\dim Q_2(g)=\frac{n(n+1)}2+\frac{n^2(n^2-1)}{12}.
 $$
 \end{lem}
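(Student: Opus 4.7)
\emph{Proof plan.}
The plan is to count $\dim Q_2(g)$ by directly solving the Killing 2-tensor equation $\{H,F\}=0$ for $F = u^{ij}(x)\,p_ip_j$ symmetric, with $H = \frac{1}{2x_1}\sum_i p_i^2$. Theorem \ref{Thm1} provides the a priori bound $\sum_i\dim g_i = \frac{(n+1)^2((n+1)^2-1)}{12}$; the task is to sharpen it to the claimed value for the specific metric $g = x_1\sum dx_i^2$.

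The key reduction exploits the Killing translations $\partial_{x_j}$ ($j>1$): combined with the finite type stabilization at 3-jets, translation invariance forces each $u^{ij}$ to be a polynomial in $\vec y = (x_2,\dots,x_n)$ of degree at most 2 (as for Killing tensors on the flat fiber $\R^{n-1}$). Hence I would write
$$
u^{ij}(x_1,\vec y) = A^{ij}(x_1) + B^{ij}_a(x_1)\,y_a + C^{ij}_{ab}(x_1)\,y_a y_b,
$$
with $C^{ij}_{ab}$ symmetric in $ab$, substitute into $\{H,F\}=0$, and collect coefficients of the cubic $p$-monomials to obtain a coupled linear ODE system in $x_1$ for $A,B,C$. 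The $\mathfrak{so}(n-1)$-rotations of $\vec y$ then act on this system and decompose it into isotypic pieces (scalars, vectors, symmetric traceless tensors and their traces), each a small Euler-type ODE in $x_1$ whose solutions are elementary powers. Summing free parameters across sectors should yield exactly $\frac{n(n+1)}{2}+\frac{n^2(n^2-1)}{12}$.

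The matching lower bound is achieved by an explicit construction: the Hamiltonian $H$; the symmetric products in $S^2Q_1(g)$, where $Q_1(g)=\mathfrak{iso}(n-1)$ is spanned by $p_j$ $(j>1)$ and rotations $x_ap_b-x_bp_a$ $(1<a<b\le n)$; and ``hidden'' boost-like integrals modelled on $y_aH - p_1p_a$ and degree-2 variants of the form $\tfrac12 y_ay_b H - \tfrac12(y_ap_b+y_bp_a)p_1 + x_1\,\delta_{ab}\cdot(\text{trace term})$. For $n=2$ this produces the four integrals $H$, $p_2^2$, $x_2H-p_1p_2$, and $\tfrac{x_2^2}{2}H - x_2p_1p_2 + x_1p_2^2$, confirming the count $3+1=4$.

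The main obstacle is the ODE bookkeeping in the second paragraph. Off-diagonal components $u^{1a}$ couple the radial direction $x_1$ to the fiber $\vec y$ through the $1/x_1^2$ term in $\{H,F\}$, so a naive separation of variables fails and the isotypic decomposition under $\mathfrak{so}(n-1)$ is essential to decouple the system. The precise numerology $\frac{n(n+1)}{2}+\frac{n^2(n^2-1)}{12} = \dim g_0 + \dim g_2$ --- i.e.\ the entire $g_1$-contribution $\frac{n(n^2-1)}{3}$ is killed by compatibility obstructions --- reflects the specific failure of formal integrability for this metric, and exhibiting this failure cleanly is the technical heart of the argument.
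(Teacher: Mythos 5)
Your lower-bound construction is sound and matches what the paper actually finds: the integral $H$ together with your boost family $y_aH-p_1p_a$ and its quadratic variants accounts for the $\frac{n(n+1)}2$ term (the paper's arbitrary inhomogeneous quadric $\b$), while $S^2Q_1(g)$ accounts for the $\frac{n^2(n^2-1)}{12}$ term (the paper's $\t_{ij}$, i.e.\ the flat $(n-1)$-dimensional Killing $2$-tensors). The gap is in the upper bound. Your key reduction --- that invariance under the translations $\p_{x_j}$, $j>1$, plus finite type forces each $u^{ij}$ to be polynomial of degree at most $2$ in $\vec y$ --- is asserted, not proved, and the general principle behind it is false: a Killing field acts on the finite-dimensional space $Q_2(g)$ through $\{p_j,\cdot\}$, and this operator can a priori have nonzero (even imaginary) eigenvalues, producing exponential or trigonometric dependence of the coefficients on the invariant coordinate (on the round sphere the Killing fields transversal to a rotation field depend trigonometrically, not polynomially, on the angle). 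For $n\ge3$ one can rescue nilpotency of $\{p_j,\cdot\}$ on $Q_2(g)$ because the translations lie in $[\mathfrak{g},\op{rad}\mathfrak{g}]$ for $\mathfrak{g}=Q_1(g)=\mathfrak{iso}(n-1)$ and hence act nilpotently in any finite-dimensional module; but that yields polynomiality of \emph{some} degree, not degree $\le2$, and for $n=2$ (where $Q_1$ is one-dimensional abelian) even nilpotency is not automatic. Since the lemma asserts an exact equality, an ansatz that could miss solutions is a genuine hole, not a bookkeeping convenience.

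The paper's proof avoids this by eliminating in the opposite order. The coefficients of $p_1^3$, $p_1^2p_i$ and $p_1p_ip_j$ in $\{H,F\}=0$ are ODEs in $x_1$ alone (with $\vec y$ entering only as a parameter); integrating them gives $b_{11}=\b(\vec y)/x_1$, $b_{1i}=\z_i(\vec y)/\sqrt{x_1}-\p\b/\p x_i$, and an analogous closed formula for $b_{ij}$ involving $\t_{ij}(\vec y)$. Only then are the remaining coefficients ($p_i^3$, $p_i^2p_j$, $p_ip_jp_k$ with $i,j,k>1$) imposed, and separating the distinct powers of $x_1$ (note the half-integer powers carried by $\z$) forces $\z=0$, $\b$ an arbitrary inhomogeneous quadric and $\t$ a flat Killing $2$-tensor in $n-1$ variables; polynomiality in $\vec y$ comes out as a conclusion rather than going in as a hypothesis. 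If you wish to keep your order of elimination, you must first establish the degree bound in $\vec y$ --- e.g.\ by analyzing the prolongation connection of the finite-type system along a fiber $x_1=\mathrm{const}$ --- and that is essentially the same amount of work as the paper's direct computation.
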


 \begin{proof}
The Hamiltonian is $H=\frac1{x_1}\sum p_i^2$. Let $F=\sum b_{ij}p_ip_j$ be a quadratic integral.
The condition $\{H,F\}=0$ is equivalent to the PDE system on functions $b_{ij}=b_{ij}(x)$.
The coefficient of $p_1^3$ gives
 $$
\frac{\p b_{11}}{\p x_1}=-\frac{b_{11}}{x_1}\ \Rightarrow\ b_{11}=\frac{\b(x_2,\dots,x_n)}{x_1}.
 $$
The coefficients of $p_1^2p_i$ for $i>1$
give
 $$
2x_1\frac{\p b_{1i}}{\p x_1}+b_{1i}+\frac{\p\b}{\p x_i}=0\ \Rightarrow\
b_{1i}=\frac{\z_i(x_2,\dots,x_n)}{\sqrt{x_1}}-\frac{\p\b}{\p x_i}.
 $$
The coefficients of $p_1p_i^2$ and $p_1p_ip_j$ for $i,j>1$ give
 $$
b_{ij}=\t_{ij}(x_2,\dots,x_n)-2\sqrt{x_1}\Bigl(\frac{\p\z_i}{\p x_j}+\frac{\p\z_j}{\p x_i}\Bigr)
+2x_1\frac{\p^2\b}{\p x_i^2}+\frac{\b}{x_1}\delta_{ij}.
 $$
Substituting these expressions into the coefficients of $p_i^3$, $p_i^2p_j$ and $p_ip_jp_k$
and separating different powers of $x_1$ we obtain
that $\b$ is arbitrary inhomogeneous quadric, $\z=0$ and $\t$ correspond to the (maximal) space
of Killing 2-tensors in $(n-1)$ dimensions. Since the inhomogeneous quadrics in $(n-1)$ dimensions
are bijective with homogeneous quadrics in $n$ dimensions, we are done.
 \end{proof}

Notice that for the above structure $\dim Q_1(g)=\frac{n(n-1)}2$, $n>1$, so that for $n>2$ it is
not sub-maximal for the dimension of Killing fields.
Let us consider sub-maximal metrics for $\dim Q_1$; they turn out to be not submaximal
for the dimension of Killing 2-tensors.
 \begin{lem}\label{L2}
For $n>2$ and the metric $g=dx_1^2+ds^2_{n-1}$, where the latter is a metric
on $D^{n-1}(x_2,\dots,x_n)$ of constant sectional curvature $c\ne0$,
 $$
\dim Q_2(g)=1+\frac{n(n-1)}2+\frac{n^2(n^2-1)}{12}.
 $$
 \end{lem}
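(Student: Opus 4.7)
The plan is to mirror the strategy of Lemma \ref{L1}: decompose a quadratic integral according to powers of the distinguished momentum $p_1$ (conjugate to the flat factor), integrate the resulting $x_1$-ODE system, and reduce everything to an analysis on the base $(N^{n-1}, ds^2_{n-1})$.

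First I would write $H = \tfrac12 p_1^2 + h(x', p')$ with $h$ the geodesic Hamiltonian of $ds^2_{n-1}$, and expand a quadratic integral
\[
F = \alpha(x_1, x') p_1^2 + 2\beta(x_1, x', p') p_1 + \gamma(x_1, x', p'),
\]
with $\beta$ linear and $\gamma$ quadratic in $p' = (p_2, \dots, p_n)$. Since $h$ involves neither $x_1$ nor $p_1$, the condition $\{H, F\} = 0$ separates cleanly by powers of $p_1$:
\[
\text{(I)}\ \partial_{x_1}\alpha = 0,\ \ \text{(II)}\ 2\partial_{x_1}\beta = \{h, \alpha\},\ \ \text{(III)}\ \partial_{x_1}\gamma = 2\{h, \beta\},\ \ \text{(IV)}\ \{h, \gamma\} = 0.
\]
From (I), $\alpha = \alpha(x')$, and integrating (II), (III) in $x_1$ gives
\[
\beta = \tfrac{x_1}{2}\{h, \alpha\} + \beta^{(0)}(x', p'),\quad \gamma = \tfrac{x_1^2}{2}\{h,\{h,\alpha\}\} + 2x_1\{h, \beta^{(0)}\} + \gamma^{(0)}(x', p'),
\]
so $F$ is at most quadratic in $x_1$. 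Plugging into (IV) and matching the $x_1$-powers yields $\{h, \gamma^{(0)}\} = 0$, $\{h, \{h, \beta^{(0)}\}\} = 0$, and $\{h,\{h,\{h,\alpha\}\}\} = 0$.

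The first condition identifies $\gamma^{(0)}$ with a Killing 2-tensor of $ds^2_{n-1}$, giving the maximal $\tfrac{n^2(n^2-1)}{12}$-parameter family available in constant curvature dimension $n-1$. The crux is then to show, on constant curvature $c \neq 0$: \textbf{(A)} only constants $\alpha$ make $\{h,\{h,\alpha\}\}$ a Killing 2-tensor, and \textbf{(B)} only Killing-field $\beta^{(0)}$ make $\{h, \beta^{(0)}\}$ a Killing 2-tensor (and then $\{h, \beta^{(0)}\} = 0$ automatically). I would translate (A) into the overdetermined PDE
\[
\nabla_i\nabla_j\nabla_k\alpha = \tfrac{c}{3}\bigl(2g_{jk}\nabla_i\alpha - g_{ik}\nabla_j\alpha - g_{ij}\nabla_k\alpha\bigr),
\]
trace it to obtain $\Delta\alpha = \tfrac{2c(n-2)}{3}\alpha + \mathrm{const}$, and then work in geodesic polar coordinates where the PDE forces the radial profile to satisfy $f''' = 0$; compatibility with the eigenvalue equation and the expansion of the warping function at $r = 0$ then leaves only $f \equiv \mathrm{const}$, and homogeneity gives $\alpha \equiv \mathrm{const}$. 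Statement (B) would follow by an analogous Ricci-identity computation showing that $\nabla_{(k}(\mathcal{L}_\xi g)_{ij)} = 0$ on constant curvature forces $\mathcal{L}_\xi g = 0$, where $\xi$ is the vector field dual to $\beta^{(0)}$.

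Summing the three contributions --- $1$ for $\alpha$, $\tfrac{n(n-1)}{2}$ for $\beta^{(0)}$ (Killing fields of $ds^2_{n-1}$), and $\tfrac{n^2(n^2-1)}{12}$ for $\gamma^{(0)}$ --- yields the stated dimension. The main obstacle I anticipate is the rigorous verification of (A) and (B): the natural list of integrals $p_1^2$, $p_1\cdot(\text{Killing of }ds^2_{n-1})$, and Killing 2-tensors of $ds^2_{n-1}$ already saturates the predicted total, so the lemma is equivalent to the absence of exotic ``mixed'' integrals with explicit $x_1$-dependence, and everything beyond this is routine bookkeeping of the Poisson bracket by powers of $p_1$.
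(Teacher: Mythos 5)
Your decomposition is exactly the paper's: the author likewise expands $\{H,F\}=0$ by powers of $p_1$ (working in stereographic coordinates on the curved factor), identifies the $p_1$-free part with Killing $2$-tensors of $ds^2_{n-1}$ and the coefficient $b^0$ with Killing fields, and reduces everything to showing that the leading coefficient $a=\alpha$ is constant, which he does by exhibiting the induced second-order Frobenius system on $a$ and asserting that its compatibility conditions force $a=\mathrm{const}$; so your plan is correct and essentially identical, with (A) and (B) being precisely the crux that the paper also treats tersely. One remark that closes (A) and (B) immediately for $c>0$ (the only case the paper actually proves): if $K=\{h,u\}$ is itself an integral of $h$ on the compact base, then averaging over the unit cotangent bundle gives $\int K^2\,d\mu=-\int u\{h,K\}\,d\mu=0$, hence $K=0$; applied with $u=\{h,\alpha\}$ and $u=\beta^{(0)}$ this yields $\mathrm{Hess}\,\alpha=0$ (so $\alpha=\mathrm{const}$ by irreducibility of the holonomy) and $\beta^{(0)}$ Killing, which saves the polar-coordinate and Ricci-identity computations you anticipate as the main obstacle.
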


 \begin{proof}
We will consider only the case of positive curvature $c>0$.
Using stereographic coordinates on $M^{n-1}(c)$ the Hamiltonian on $T^*M$ writes
 $$
H=p_1^2+(R^2+x_2^2+\dots+x_n^2)^2(p_2^2+\dots+p_n^2).
 $$
The Killing fields (written as functions on $T^*M$) are
 $$
K_1=p_1,\ \ K_i^0=\Bigl(R^2-\sum_2^nx_j^2\Bigr)p_i+2x_i\sum_2^nx_jp_j,\ \
K_{ij}^1=x_ip_j-x_jp_i
 $$
(the indices $i,j$ run from 2 to $n$).

Write the quadratic integral $F=ap_1^2+p_1\sum_2^nb_ip_i+\sum_{i,j>1}c_{ij}p_ip_j$.
The coefficients of $p_1^3$ in the equation $\{H,F\}=0$ yields $\frac{\p a}{\p x_1}=0$.
Then the coefficients of $p_1^2p_i$ yields $b_i=b_i^0-(R^2+x_2^2+\dots+x_n^2)\,x_1\frac{\p a}{\p x_i}$,
where $b^0_i=b^0_i(x_2,\dots,x_n)$, $i>1$.

The coefficients of $p_ip_jp_k$ with $i,j,k>1$ yield the condition that $\sum_{i,j>1}c_{ij}p_ip_j$
is a Killing 2-tensor of the metric $ds_{n-1}^2$. Since the latter has constant sectional
curvature, we conclude that this contribution to $F$ is generated by pairwise products of Killing fields
in $(n-1)$ dimension:
 $$
\Bigl\langle\sum_{i,j>1}c_{ij}p_ip_j\Bigr\rangle=S^2\langle K^0_i,K^1_{ij}\rangle.
 $$

Substituting this and evaluating the coefficient of $p_1p_ip_j$ we obtain certain expressions linear
in $x_1$, for which free terms mean that $\sum_2^n b_i^0p_i$ is a Killing fields for $ds_{n-1}^2$.
Thus it is a linear combination of $K^0_i,K_{ij}^1$.

The coefficients of $x_1$ give a system on $a=a(x_2,\dots,x_n)$ of the second order in Frobenius form
(its second symbol vanish $g_2=0$). But it is inconsistent -- the
compatibility conditions give $a=\op{const}$. The claim follows.
 \end{proof}

Let us notice that in the case of Lemma \ref{L2} we have $Q_2(g)=S^2Q_1(g)$, but this equality
is wrong in the case of Lemma \ref{L1}.

\medskip

{\bf Cubic and higher degree integrals}. In any order $d$ and dimension $n$ the quantity $\dim Q_d(g)$
attains maximum precisely when $g$ is a spacial form. In addition $Q_d(g)=S^dQ_1(g)$ for such metrics $g$.

Almost nothing is known about submaximal cases, except for $n=2$. Consider the latter case.
It was shown in \cite{K$_1$} that for $d=3$ and $g$ being of non-constant curvature
$\dim Q_3(g)<7$ (for the maximal case this dimension is 10). Thus the gap is at least 4,
and we conjectured it is 6 (i.e. submaximal $\dim Q_3=4$ the same as submaximal $\dim Q_2$).

This conjecture was proved under additional
assumption that $g$ has a Killing field in \cite{MS}. The assumption looks natural -- one expects from
experience with degree $d=2$ that the metric of submaximal type possesses an additional linear integral.

\medskip

Let us notice that the success of the above approach is related to the fact that given $H$,
the system $\{H,F\}=0$ on coefficients of $F$ is overdetermined and of finite type. If we consider both
$H$ and $F$ as unknowns, the situation changes (we treat only the local case).

Denote by $\bar\E$ the system on coefficients of $H,F$. If one of $k=\deg H$ or $l=\deg F$
equals to 1, then $\bar\E$ is underdetermined (implying existence of functional families of
solutions). So we assume $l\ge k>1$ and use the identity $\deg\{H,F\}=k+l-1$.

In dimension $n=2$, due to existence of isothermal coordinates, the system is determined for all
$k,l$. In particular, for $k=2$ it has the structure of semi-Hamiltonian system of hydrodynamic type
\cite{BM}.

For $n=3$ the system $\bar\E$ is underdetermined for $k=2$, $l=2,3$ and determined for
$(k,l)=(2,4)$. For $n=4$ it is determined for $(k,l)=(2,2)$. For all other parameters and for $n\ge5$
$\bar\E$ is overdetermined (so dimensional restrictions on its solution space are expected).

\section{Tanaka theory: old and new}\label{S3}

Tanaka theory \cite{T} describes symmetries (automorphisms) of vector distributions $\Delta\subset TM$.
With every such distribution one associates the graded Tanaka algebra $\g$ defined as follows.

Define the weak derived flag $\{\Delta_i\}$ through the module of its sections by
$\Gamma(\Delta_{i+1})=[\Gamma(\Delta),\Gamma(\Delta_i)]$ with $\Delta_1=\Delta$.
We assume that our distribution is completely non-holonomic, i.e. $\Delta_k=TM$ for some $k$,
and that $\Delta$ is regular, i.e. ranks of $\Delta_i$ are constant along $M$.

The quotient sheaf $\m=\oplus_{i<0}g_i$, $g_i=\Delta_{-i}/\Delta_{-i-1}$ has a natural structure of a graded
nilpotent Lie algebra at any point $x\in M$. The bracket on $\m$ is induced by the commutator
of vector fields on $M$. Distribution $\Delta$ is called strongly regular if $\m(x)$, as a Lie algebra,
does not depend on the choice of $x\in M$.

The Tanaka prolongation $\g=\m\oplus\g_+=\g_{-k}\oplus\dots\g_{-1}\oplus\g_0\oplus\g_1\oplus\dots$ is
the maximal graded Lie algebra with negative graded part equal to $\m$. The non-negative
part $\g_+$ is a subalgebra, and it a-priori can be infinite-dimensional. The graded components can be
calculated algorithmically, see \cite{T,AK}. For instance $\g_0$ is the space of grading
preserving derivations of $\m$.

For strongly regular distributions N. Tanaka constructed an absolute parallelism on the prolongation
manifold, and this gives the way to define curvature and flatness \cite{T}. In particular, it shows the
standard model $(\op{Exp}(\m),\g_{-1})$ gives the maximal symmetric distribution
(among strongly regular distributions) with the symmetry algebra $\g$.

The condition of strong regularity was removed in \cite{K$_3$}. The following statement is a combination
of results in \cite{K$_3$,AK} (dimensional part is a paraphrase of Theorem \ref{Thm1};
in the first part of the theorem $\sup$ can be changed to $\inf$).

 \begin{theorem}
Let $\Delta$ be of finite type, i.e. for some $k$ we have: $\g_i(x)=0$, for all $i\ge k$, $x\in M$.
Then its symmetry algebra is majorized by the Tanaka algebra $\g$ in the sense
 $$
\dim\op{sym}(\Delta)\le\sup_M\sum_i\dim\g_i(x).
 $$
This inequality is an equality if and only if the distribution $\Delta$ is flat.
 \end{theorem}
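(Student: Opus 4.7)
The plan is to encode the symmetries of $\Delta$ as a linear Lie equation in jets, identify its symbols with the graded pieces $\g_i(x)$ of the Tanaka algebra, and then invoke Theorem \ref{Thm1}. A crucial point is that jets of vector fields must be filtered by weights adapted to the weak derived flag $\{\Delta_i\}$, so that the symbol spaces of the resulting PDE reflect the grading on $\g$ rather than the naive order filtration on $J^\infty(TM)$.

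First I would set up the Lie equation: a vector field $\xi$ is a symmetry of $\Delta$ iff $[\xi,\Gamma(\Delta)]\subset\Gamma(\Delta)$. At a point $x\in M$, the value $\xi(x)\in T_xM$ can be arbitrary and contributes the negative part $\m(x)=\oplus_{i<0}\g_i(x)$ to the dimension count. Passing to weighted $1$-jets modulo the $0$-jet, the requirement that $\xi$ preserve the entire filtration cuts out precisely the grading-preserving derivations of $\m(x)$, which is by definition $\g_0(x)$. Iterating Spencer-type prolongation in the weighted setting, the $k$-th symbol of the Lie equation coincides with the algebraic Tanaka prolongation $\g_k(x)$, by the very universal property used to define $\g$. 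Under the finite-type hypothesis $\g_i(x)=0$ for $i\ge k$, the Lie equation itself is of finite type in the sense of Section \ref{S1}, and Theorem \ref{Thm1} applied pointwise yields the inequality $\dim\op{sym}(\Delta)\le\sup_M\sum_i\dim\g_i(x)$.

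For the equality statement, Theorem \ref{Thm1} gives equality exactly when the Lie equation is formally integrable. For the symmetry equation of a distribution, formal integrability is equivalent to the vanishing of the Spencer/Tanaka curvatures at every point, which is the standard characterization of flatness of $\Delta$. In that case one verifies the reverse inequality by exhibiting the left invariant action of the full Tanaka algebra $\g$ on the nilpotent Lie group $\op{Exp}(\m)$ equipped with the distribution $\g_{-1}$, so the bound is saturated by the flat model and no further argument is needed.

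The main obstacle is the identification of Spencer-type prolongation symbols with the Tanaka graded pieces in the absence of strong regularity: when the nilpotent algebra $\m(x)$ varies with $x\in M$ one cannot fix a single model algebra, and the symbols must be computed fiberwise while tracking how the weighted filtration behaves under prolongation. This is the technical content of \cite{K$_3$,AK} and it is precisely what forces the $\sup_M$ (interchangeable with $\inf_M$) in the bound rather than a single uniform dimension. The formal integrability direction requires the analogous pointwise refinement: one must express each curvature obstruction as a cocycle in a generalized Spencer complex built fiberwise from $\m(x)$, and only then identify its vanishing with flatness.
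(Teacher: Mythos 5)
Your proposal follows essentially the same route as the paper: the theorem is presented there as a combination of results from \cite{K$_3$,AK}, with the dimensional bound explicitly described as a paraphrase of Theorem \ref{Thm1}, and your reduction --- encoding $\op{sym}(\Delta)$ as a weighted Lie equation whose symbols are the Tanaka prolongations $\g_i(x)$ and then applying the finite-type estimate --- is exactly that paraphrase. The genuinely technical steps (the fiberwise identification of weighted symbols with $\g_i(x)$ in the absence of strong regularity, and the equivalence of formal integrability of the Lie equation with flatness via the generalized Spencer complex) are the same ones the paper defers to \cite{K$_3$}, and you correctly isolate them as the content of those references rather than claiming to reprove them.
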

Thus the maximal symmetry model is unique (isomorphic to the standard model),
which is not the case with maximal cases for other geometric structures.

The following statement re-phrases Theorem 3 from \cite{K$_3$}.

 \begin{theorem}\label{symTan}
Let $h$ be a Lie algebra symmetry of a distribution $\Delta$ on a manifold $M$,
and let $\mathfrak{g}$ be the Tanaka algebra of $\Delta$. Then there exists a
Lie algebra compatible filtration $F_i$ of $h$ such that the corresponding graded
Lie algebra is a subalgebra of the Tanaka algebra: $\mathfrak{h}\subset\mathfrak{g}$.
 \end{theorem}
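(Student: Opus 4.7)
The plan is to construct the filtration $F_\bullet h$ by measuring the order of vanishing of each symmetry at a chosen point $x\in M$ relative to the Tanaka flag, and then to embed the associated graded algebra into $\g$ by invoking the defining universal property of the Tanaka prolongation. Fix a regular $x\in M$. For $-k\le i\le-1$ set
$$F_ih=\{\xi\in h:\xi(x)\in\Delta_{-i}(x)\},$$
with $F_ih=h$ for $i<-k$, and with $F_0h=\{\xi\in h:\xi(x)=0\}$ the isotropy subalgebra. Since every $\xi\in h$ preserves each $\Delta_j$, the adjoint action of $F_0h$ on germs at $x$ descends to grading-preserving derivations of $\m(x)$, yielding a linear map $F_0h\to\g_0$. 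Inductively, $F_{i+1}h$ is defined as the kernel of the natural map $F_ih\to\g_i$, obtained by reading off the degree-$i$ derivation of $\m(x)$ induced by the appropriate jet at $x$ of $\xi\in F_ih$, using the characterization of $\g_i$ furnished by the Tanaka prolongation.

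Next I would verify bracket compatibility: $[F_ih,F_jh]\subset F_{i+j}h$. For $i,j<0$ this is precisely the defining property of the weak derived flag. For $i<0\le j$, the Leibniz identity
$$L_\xi[\eta_1,\eta_2]=[L_\xi\eta_1,\eta_2]+[\eta_1,L_\xi\eta_2]$$
combined with $L_\xi\Delta_j\subset\Delta_j$ for symmetries delivers the claim. The case $i,j\ge0$ follows from iterating the Jacobi identity on jets at $x$. Hence $\op{gr}(h)=\bigoplus_iF_ih/F_{i+1}h$ is a graded Lie algebra and the pieces above assemble into a graded Lie algebra homomorphism $\op{gr}(h)\to\g$. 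Injectivity is automatic by construction: if the class of $\xi$ in $F_ih/F_{i+1}h$ maps to zero in $\g_i$, then $\xi\in F_{i+1}h$ by definition.

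The main obstacle is ensuring that, for $i\ge1$, the jets of a symmetry at $x$ really land in $\g_i$ rather than in some a priori larger space of degree-$i$ derivations of $\m$. This is exactly the content of the Tanaka prolongation: $\g_i$ is characterized as the maximal space of degree-$i$ derivations consistent, via Jacobi, with the already-constructed $\g_0,\dots,\g_{i-1}$, and the consistency relations among jets of an actual vector field preserving $\Delta$ are precisely those Jacobi relations. In the general, non-strongly-regular situation of \cite{K$_3$}, the symbol $\m(x)$ varies with $x$, so the prolongation must be performed pointwise in the sense developed in \cite{AK}. The finite-type hypothesis on $\Delta$ then forces the positive filtration to terminate, so $\op{gr}(h)$ is a finite-dimensional graded subalgebra of $\g$, as asserted.
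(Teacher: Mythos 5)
The paper itself contains no proof of this theorem: it is stated as a re-phrasing of Theorem 3 of \cite{K$_3$}, and your argument is essentially the one used there and in Tanaka's original work — filter $h$ by the order of contact with the flag $\{\Delta_i\}$ at a regular point $x$, continue the filtration into non-negative degrees via jets, and map $\op{gr}(h)$ into $\g$ using the maximality property of the Tanaka prolongation. The architecture is right, and the crux you single out (that positive-degree jets of a symmetry land in $\g_i$ and not in a larger space of degree-$i$ derivations of $\m$) is indeed the point where the definition of the prolongation is used.

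One step is justified by an argument that does not work as written. For $i,j<0$ you assert that $[F_ih,F_jh]\subset F_{i+j}h$ is ``precisely the defining property of the weak derived flag.'' It is not: an element $\xi\in F_ih$ is a symmetry whose value \emph{at the single point} $x$ lies in $\Delta_{-i}(x)$; it is not a section of $\Delta_{-i}$, so the inclusion $[\Gamma(\Delta_{-i}),\Gamma(\Delta_{-j})]\subset\Gamma(\Delta_{-i-j})$ (which itself needs an inductive Jacobi argument beyond the definition $\Gamma(\Delta_{l+1})=[\Gamma(\Delta),\Gamma(\Delta_l)]$) says nothing directly about $[\xi,\eta](x)$. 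The correct argument again uses the symmetry hypothesis: write $\xi=X+\xi_0$ with $X\in\Gamma(\Delta_{-i})$, $\xi_0(x)=0$, and similarly $\eta=Y+\eta_0$. Then $[X,Y](x)\in\Delta_{-i-j}(x)$ by the flag property, $[\xi_0,\eta_0](x)=0$, and $[X,\eta_0]=[X,\eta]-[X,Y]\in\Gamma(\Delta_{-i})+\Gamma(\Delta_{-i-j})\subset\Gamma(\Delta_{-i-j})$ because $\eta$ preserves $\Gamma(\Delta_{-i})$ and the flag is increasing; likewise for $[\xi_0,Y]$. A second, smaller omission: for the graded algebra of the filtration to faithfully represent $h$ you need $\bigcap_iF_ih=0$, i.e.\ that a symmetry whose Tanaka jet at $x$ vanishes is identically zero; termination of the positive filtration alone does not give this, and one must invoke the finite-type rigidity statement of \cite{K$_3$} explicitly.
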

This is to be compared to the following result of R. Zimmer \cite{Z}
(which uses the assumption $g_1(x)\subset\mathfrak{sl}(T_xM)$ instead of grading).
 \begin{theorem}
Let $M$ be a compact group, $G$ a real algebraic group acting on $M$ by
volume-preserving transformations
and $P$ be a $G$-structure. Then Lie algebra $\mathfrak{h}=\op{sym}(P)$ embeds into
$\g=\op{Lie}(G)$.
 \end{theorem}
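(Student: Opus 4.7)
The plan is to evaluate the $1$-jet of an infinitesimal symmetry $X\in\mathfrak{h}$ at a point $x\in M$, after a choice of frame $p\in P_x$, and then to use compactness of $M$ together with the volume-preserving and algebraicity hypotheses to promote this pointwise assignment to a Lie algebra embedding $\mathfrak{h}\hookrightarrow\mathfrak{g}$. This parallels Theorem~\ref{symTan}, with the Tanaka algebra replaced by $\mathfrak{g}$ and the grading hypothesis replaced by the volume-preserving/algebraic one.

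First I would construct the candidate map. Fix $x\in M$ and $p\in P_x$. For any $X\in\mathfrak{h}$, the derivative $d_xX$, read in the frame $p$, lies a priori in $\mathfrak{gl}(T_xM)$; because $X$ preserves the $G$-structure $P$, its flow sends $P$ to $P$, and the induced endomorphism actually lies in $\mathfrak{g}\subset\mathfrak{gl}(T_xM)$. This produces a linear map $\jmath_p\colon\mathfrak{h}\to\mathfrak{g}$. Verifying that $\jmath_p$ is a Lie algebra homomorphism is a standard, if careful, calculation with the natural lift of vector fields from $M$ to the principal bundle $P$ and the Lie bracket on $\mathfrak{g}$ regarded as fiberwise vector fields.

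Second, the crucial step is injectivity of $\jmath_p$. An element of $\ker\jmath_p$ is a symmetry whose $1$-jet vanishes at $x$ in the frame $p$; equivalently, a higher-order infinitesimal automorphism of $P$. Here the assumption $g_1(x)\subset\mathfrak{sl}(T_xM)$ (which encodes volume-preservation on the linear level) and the algebraicity of $G$ enter jointly through an ergodic argument. The $G$-action preserves a smooth probability measure $\mu$ on $M$, and one considers the derivative cocycle $\alpha\colon G\times M\to G$ of this action. Its algebraic hull is a well-defined algebraic subgroup of $G$, and a form of the Borel density theorem (in the cocycle version due to Zimmer) asserts that this hull is reductive. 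A nonzero element of $\ker\jmath_p$ would produce an $\alpha$-invariant proper subbundle, incompatible with reductivity and algebraicity of $G$; after ergodic decomposition, this forces the offending symmetry to vanish.

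The main obstacle is this second step: transferring a single pointwise kernel computation into a global rigidity statement demands both pillars of the hypothesis at once. Compactness of $M$ supplies the finite invariant measure needed for ergodic decomposition, and algebraicity of $G$ is what makes the Borel density theorem applicable to constrain the Zariski closure of $\alpha$. Without either assumption one can write down counterexamples (non-compact $M$ admits translations beyond $\mathfrak{g}$; non-algebraic $G$ kills Borel density). Once injectivity of $\jmath_p$ at a generic $x$ is in hand, the required embedding $\mathfrak{h}\hookrightarrow\mathfrak{g}$ is immediate, and the formal analogy with Theorem~\ref{symTan} is complete.
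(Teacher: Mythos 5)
The paper does not actually prove this statement: it is Zimmer's embedding theorem, quoted from \cite{Z} only for comparison with Theorem \ref{symTan}, so there is no in-paper argument to measure your proposal against. More importantly, the statement as printed here is a loose paraphrase and, read literally, is false, so no proof of it can be complete. Take $M=S^2$ with the round metric, viewed as an $O(2)$-structure: $O(2)$ is real algebraic, isometries preserve the volume, $M$ is compact, yet $\mathfrak{h}=\mathfrak{so}(3)$ does not embed into $\mathfrak{g}=\mathfrak{so}(2)$. The flat torus gives another counterexample ($\mathbb{R}^2\not\hookrightarrow\mathfrak{so}(2)$) and it is compact, so your parenthetical remark that ``non-compact $M$ admits translations beyond $\mathfrak{g}$'' misplaces the obstruction. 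Zimmer's actual hypothesis is that the group acting is a connected \emph{noncompact simple} Lie group with finite center; this is precisely what the Borel density theorem requires (an invariant probability measure on an algebraic variety is supported on fixed points only for such groups), and your argument never invokes it.

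Even granting the correct hypotheses, both halves of your construction break down. First, $d_xX$ is not a well-defined endomorphism of $T_xM$ unless $X(x)=0$; choosing a connection adapted to $P$ one can form $A_X=\nabla X(x)$, but $X\mapsto A_X$ is \emph{not} a Lie algebra homomorphism (already for Killing fields $A_{[X,Y]}=[A_X,A_Y]+R(X,Y)$, so curvature obstructs), and it cannot be injective on $S^2$ for dimension reasons. Zimmer's proof works instead with the evaluation map $\mathrm{ev}_x\colon\mathfrak{h}\to T_xM$: applying Borel density to the section $x\mapsto \mathrm{ev}_x$ of the bundle associated to $P$ with fibre $\mathrm{Hom}(\mathfrak{h},\mathbb{R}^n)$ yields, for almost every $x$, a homomorphism $\pi_x\colon\mathfrak{h}\to\mathfrak{g}$ intertwining the adjoint action with the $\mathfrak{g}$-action on $T_xM$; injectivity then follows because $\ker\pi_x$ is an ideal of the simple algebra $\mathfrak{h}$, and if it were all of $\mathfrak{h}$ then $\mathrm{ev}_x$ would be a Lie algebra homomorphism into an abelian algebra, forcing the action to be trivial. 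Second, your injectivity step --- that a nonzero element of $\ker\jmath_p$ would produce an invariant subbundle ``incompatible with reductivity'' --- does not follow: reductive algebraic hulls admit invariant subbundles in abundance, and nothing in your setup prevents a symmetry from having vanishing $1$-jet at a single point (for a structure that is not of finite type such a symmetry need not even vanish near $x$). So the proposal names the right external ingredients but neither constructs a genuine homomorphism nor supplies a valid injectivity argument.
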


Theorem \ref{symTan} can be applied as follows.

 \begin{cor}\label{corTan}
Any sub-maximal symmetric case is obtained via the following construction.

Consider a subalgebra $\mathfrak{h}\subset\mathfrak{g}$ and let $h$ be a filtered Lie
algebra that cannot be monomorphically mapped into $\mathfrak{g}$
(as a filtered algebra), but whose associated graded Lie algebra equals $\mathfrak{h}$.

Suppose that $h$ has a subalgebra $h_0$ of dimension $\dim\mathfrak{g}-\dim M$ and
that for a vector subspace $\Pi\subset h$ of dimension equal to $\op{rank}(\Delta)$ we have:
$[h_0,\Pi]\subset h_0+\Pi$, $h_0\cap\Pi=0$.

If the iterated brackets of $h_0+\Pi$ generate $h$, then the homogeneous
space $H/H_0$ (where $H,H_0$ are the corresponding Lie groups) possesses $H$-invariant
nonholonomic distribution corresponding to $\Pi$.
 \end{cor}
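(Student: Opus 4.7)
My plan is to combine Theorem \ref{symTan} with the standard homogeneous-space construction of invariant distributions. Two directions must be handled: extract filtered Lie data from any sub-maximal symmetric distribution, and conversely realize such data as the infinitesimal invariants of an $H$-invariant non-holonomic distribution on $H/H_0$.

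For the first direction, I would fix a sub-maximal $\Delta$ with $h=\op{sym}(\Delta)$ and a base point $x_0$. Theorem \ref{symTan} equips $h$ with a compatible filtration $F_\bullet$ whose graded $\mathfrak{h}$ embeds as a graded subalgebra of the Tanaka algebra $\mathfrak{g}$. Set $h_0:=F_0(h)$ (the isotropy at $x_0$) and choose a vector subspace $\Pi\subset F_{-1}(h)$ of dimension $\op{rank}(\Delta)$ transverse to $h_0$, so that $\Pi$ projects isomorphically onto $\Delta_{x_0}$. Sub-maximality (non-flatness) is equivalent, via the equality case of Theorem \ref{symTan}, to $h$ \emph{not} being embeddable as a filtered subalgebra of $\mathfrak{g}$ even though its graded $\mathfrak{h}$ is: this is precisely the hypothesis on $h$ in the corollary. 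The required dimension and bracket relations on $h_0$ and $\Pi$ are then automatic from their geometric interpretation as isotropy and horizontal lift.

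For the reverse direction, given $(h,h_0,\Pi)$ as in the statement, I form $M=H/H_0$. The relation $[h_0,\Pi]\subset h_0+\Pi$ makes $(h_0+\Pi)/h_0\subset h/h_0=T_{eH_0}M$ an $\op{Ad}(H_0)$-invariant subspace, so $H$-translation promotes it to an $H$-invariant distribution $\Delta\subset TM$ of rank $\dim\Pi$ (using $h_0\cap\Pi=0$). The bracket-generation hypothesis for $h_0+\Pi$ transfers directly to complete non-holonomicity of $\Delta$, and $h\subseteq\op{sym}(\Delta)$ holds by construction.

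The principal difficulty is absorbed inside Theorem \ref{symTan}: aligning the abstract filtration of $\op{sym}(\Delta)$ with the Tanaka filtration of $\mathfrak{g}$. Once that bridge is in place, the corollary reduces to the familiar dictionary between $\op{Ad}(H_0)$-invariant subspaces of $h/h_0$ and $H$-invariant distributions on $H/H_0$, together with the translation of Lie-algebraic bracket-generation into geometric non-holonomicity. One subtlety — that $\op{sym}(\Delta)$ produced by the construction might a priori be strictly larger than $h$ — is harmless for the enumeration purpose of the corollary, since any such larger symmetry algebra would itself fit the same scheme with a different triple $(h,h_0,\Pi)$; running over all admissible subalgebras $\mathfrak{h}\subset\mathfrak{g}$ and all filtered non-embeddable lifts $h$ therefore still sweeps out every sub-maximal model.
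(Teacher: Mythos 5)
Your proposal is correct and follows exactly the route the paper intends: the paper gives no written proof of Corollary~\ref{corTan}, presenting it as an immediate application of Theorem~\ref{symTan} (which supplies the filtration on $\op{sym}(\Delta)$ with graded part inside $\mathfrak{g}$) combined with the standard dictionary between $\op{ad}(h_0)$-invariant subspaces of $h/h_0$ and $H$-invariant bracket-generating distributions on $H/H_0$, and with the observation that a filtered embedding $h\hookrightarrow\mathfrak{g}$ would reproduce the flat (maximal) model. Your added remarks on the possible enlargement of $\op{sym}(\Delta)$ and the implicit homogeneity of sub-maximal models are consistent with how the paper uses the corollary in Section~\ref{S4}.
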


We call such an algebra $h$ a deformation of $\mathfrak{h}$.
Notice that if $h\subset\g$ then the symmetry algebra of the distribution on $H/H_0$
will be $\g$, i.e. the maximal instead of sub-maximal case.

First obstructions to deformations of Lie algebra structure are given by $H^2(\mathfrak{h},\mathfrak{h})$.
These cohomology groups can be however non-zero
even in rigid cases (this is what often happens when $\mathfrak{h}$ is not semi-simple).
We will see an example of this in the next section.

\section{Symmetries of rank 2 distributions}\label{S4}

Consider rank 2 distributions, at first in 5-dimensional space. According to Goursat
\cite{G} they are encoded as Monge underdetermined ODEs
 \begin{equation}\label{E0}
y'=F(x,y,z,z',z'').
 \end{equation}
The equation-manifold $\R^5(x,y,z,z_1,z_2)$ is equipped with the Pfaffian system
$\{dz-z_1dx,dz_1-z_2dx,dy-Fdx\}$ and the rank 2 distribution
$\langle\D_x=\p_x+z_1\p_z+z_2\p_{z_1}+F\p_y,\p_{z_2}\rangle$ is dual to it.

Internal symmetries of (\ref{E0}) are by definition the symmetries (automorphisms)
of this distribution.

Condition $F_{z_2z_2}\ne0$ guarantees that the symmetry algebra is finite-dimensional.
In this case E.\,Cartan \cite{C} showed that the dimension is bounded by 14,
and in the case of equality the maximal group is $G_2$. The maximal symmetric
model is unique up to equivalence and it is given by the celebrated Hilbert-Cartan equation
 \begin{equation}\label{HC}
y'=(z'')^2.
 \end{equation}

What about sub-maximal cases? They all are given by the following Monge equations
(in \cite{C} overdetermined involutive 2nd order PDE systems on the plane were
classified, but it is not difficult to establish the equivalence):
 \begin{equation}\label{submax25}
y'=(z'')^m.
 \end{equation}
Here $m=0,1$ corresponds to the case of Engel distribution in $\R^4$ which has the contact pseudogroup
of symmetries; if $2m-1\in\{\pm\frac13,\pm3\}$ the equation is equivalent to (\ref{HC}) and its symmetry is
the exceptional Lie group $G_2$; for other cases the group of symmetries $\mathfrak{g}$ has dimension 7
($m\in\R$ is the only invariant of both the group and the equation) and is given in generators as follows:
 \begin{multline*}
\hspace{-10pt}\op{sym}\{(\ref{submax25})\}=\langle
W_1=\p_x, W_2=\p_y, W_3=\p_z, W_4=x\p_x+y\p_y+2z\p_z+z_1\p_{z_1},\\
W_5=x\p_z+\p_{z_1}, W_6=my\p_y+z\p_z+z_1\p_{z_1}+z_2\p_{z_2}, W_7=\\
\hspace{10pt}z_2^{m-1}\p_x+(m-1)\!\int\!z_2^{2m-2}dz_2\cdot\p_y+(z_1z_2^{m-1}-\tfrac1my)\p_z+(1-\tfrac1m)z_2^m\p_{z_1}
\rangle.
 \end{multline*}
Abstractly the Lie algebra structure reads off from the structure equations on p.169 \cite{C}.
In the basis $\langle X_1,X_2,X_3,X_4,X_5,Y_1,Y_2\rangle$
dual to the Cartan coframe $\langle\oo_1,\oo_2,\oo_3,\oo_4,\oo_5,\varpi_1,\varpi_2\rangle$
the non-trivial commutators are
 \begin{gather*}
[X_1,Y_1]=2X_1,\ [X_2,Y_2]=X_1,\ [X_3,X_4]=X_1,\\
[X_2,Y_1]=X_2,\ [X_3,Y_1]=X_3,\  [X_4,Y_1]=X_4,\  [Y_2,Y_1]=Y_2,\\
[X_2,X_5]=(IX_3+Y_2),\ [X_3,X_5]=(X_2+\tfrac43IX_4),\\
[X_4,X_5]=(X_3-IY_2),\ [X_5,Y_2]=X_4.
 \end{gather*}
The parameter $I$ is a semi-invariant and $I^2$ is a bona-fide invariant
(Cartan \cite{C} writes that $I$ is obtained from the forth root, so some power of it is an 
invariant; we shall see that this power is 2).

The above description of $\mathfrak{g}$ is however not convenient, since
the restriction of $\op{ad}(X_5)$ to the derived algebra
$\mathfrak{g}_2=\langle X_1,X_2,X_3,X_4,Y_2\rangle$
is not normalized ($\op{ad}(Y_1)$ acts like the grading element).
This operator is semi-simple and diagonalizing it
(changing the basis in $\langle X_2,X_3,X_4,Y_2\rangle$) is
equivalent to passing to the basis $\{W_i\}_{i=1}^7$.

In this new basis the description of $\mathfrak{g}$ is the following.
The derived series is $\mathfrak{g}_2=\langle W_1,W_2,W_3,W_5,W_7\rangle$,
$\mathfrak{g}_3=\langle W_3\rangle$, and $\mathfrak{g}_2$ is the Heisenberg algebra
$\mathfrak{h}=\mathfrak{h}_{-1}\oplus\mathfrak{h}_{-2}$ (now indices denote the grading)
given by the symplectic form on $\mathfrak{h}_{-1}=\langle W_1,W_2,W_5,W_7\rangle$
with values in $\mathfrak{h}_{-2}=\langle W_3\rangle$. In other words, the non-trivial
brackets are
 $$
[W_1,W_5]=W_3,\ [W_2,W_7]=\tfrac{-1}mW_3
 $$
(we keep the normalizing factor).

This is extended to $\tilde{\mathfrak{h}}=\mathfrak{h}\oplus\R\cdot W_4$ by the grading
element $\op{ad}(W_4)|_{\mathfrak{h}_k}=k\cdot\op{id}$.
Finally $\mathfrak{g}$ is obtained from $\tilde{\mathfrak{h}}$ by right extension
via the element in $H^1(\tilde{\mathfrak{h}},\tilde{\mathfrak{h}})$ given as follows
($m\ne1/2$):
 $$
-\op{ad}(W_6)=mW_2\ot\theta_2+W_3\ot\theta_3+W_5\ot\theta_5+(1-m)W_7\ot\theta_7
 $$
($\theta_i$ is the coframe dual to $W_i$).

Subtracting the trace we get the operator $A=\op{ad}(W_6-\frac12W_4)$ on $\mathfrak{h}_{-1}$
(it vanishes on $\mathfrak{h}_{-2}$), whose conformal class is an invariant of the Lie algebra
$\mathfrak{g}$.

From the spectrum $\op{Sp}(A)=\{\pm\frac12,\pm(\frac12-m)\}$ we obtain the absolute
invariant $\lambda=\op{Tr}(A^4)/\op{Tr}(A^2)^2$. Passing to $2(1-2\lambda)$ we get the
invariant
 $$
J=\frac{(1-2m)^2}{(1-2m+2m^2)^2}.
 $$
Calculating the invariant $J$ in the Cartan basis (using $\op{ad}(X_5)$) we get
$J=\frac9{25}(1+I^{-2})$. Now we can relate the parameters:
 $$
I=\pm\frac{i(1-2m+2m^2)}{2\sqrt{(m+1)(m-1/3)(m-2/3)(m-2)}}.
 $$
We see that poles of $I$ correspond to maximal finite-dimensional symmetry algebra $G_2$,
while linearizable cases correspond to $I=\pm\frac34$.

For the exceptional case $m=1/2$ the spectrum is multiple (this happens also for $m=0,1$),
and this is the only case, when $A$ is not semi-simple:
 $$
-\op{ad}(W_6-\tfrac12W_4)=\tfrac12W_5\ot\theta_5-\tfrac12W_1\ot\theta_1+\tfrac12W_2\ot\theta_7.
 $$

It is more convenient to describe the equivalence classes in terms of the parameter $k=2m-1$.
On the real line $\R^1(k)$ we have the action of the group $\Z_2\oplus\Z_2$ with generators
$k\mapsto-k$, $k\mapsto k^{-1}$ (the latter is not defined at 0). Its orbits correspond 
to the equivalent Monge equations (\ref{submax25}). In particular, the cases of symmetry dimensions 
$\infty$ and $14$ corresponds to the orbits of $k=1$ and $k=3$, and the exceptional case of 
Jordan block corresponds to $k=0$. 
 The orbit space is the union of two $k$-intervals $[0,1/3]\cup[1/3,1]$.
The invariant $I$ expresses via $k$ so:
 $$
I^2=\frac{(k^2+1)^2}{(k^2-9)(1/9-k^2)}. 
 $$
 
\medskip

Consider the next case of rank 2 distributions in $\R^6$, which are not
reducible to the previous case of $(2,5)$-distributions. Then \cite{DZ,AK}
its symmetry algebra is at most 11-dimensional. It is enough to restrict to the following
Monge equations in this case (in dimension 6 not all rank 2 distributions are realized
as this underdetermined ODE, the other two cases correspond to hyperbolic structures involving
only second derivatives and the elliptic structures that can be realized only as Monge systems
\cite{AK} but their symmetry algebras are at most 8-dimensional)
 \begin{equation*}
y'=F(x,y,z,z',z'',z''').
 \end{equation*}
Non-degeneracy condition $F_{z_3z_3}\ne0$ is sufficient for finite-dimensionality.
Here the maximal symmetric model is unique and is given by
 \begin{equation}\label{max13}
y'=(z''')^2.
 \end{equation}
We refer to \cite{AK} for the description of the symmetry algebra (both as internal and
as external higher symmetries).

The submaximal case is given by the family ($\epsilon\in\C$)
 \begin{equation}\label{submax26}
y'=(z''')^2+\epsilon^2(z'')^2.
 \end{equation}
The symmetry algebra is 9-dimensional and is represented as follows:
 \begin{multline*}
\op{sym}\{(\ref{submax26})\}=\langle W_1=\p_x,W_2=\p_y,W_3=\p_z,\\
W_4=2y\p_y+z\p_z+z_1\p_{z_1}+z_2\p_{z_2}+z_3\p_{z_3},W_5=x\p_z+\p_{z_1}, \\
W_6=2\epsilon^2z_1\p_y+\tfrac12x^2\p_z+x\p_{z_1}+\p_{z_2}, \\ W_7=2(z_2+\epsilon^2(xz_1-z))\p_y+\tfrac16x^3\p_z+\tfrac12x^2\p_{z_1}+x\p_{z_2}+\p_{z_3}, \\
W_8=e^{-\epsilon x}(2\epsilon^3z_2\p_y-\p_z+\epsilon\p_{z_1}-\epsilon^2\p_{z_2}+\epsilon^3\p_{z_3}),\\
W_9=e^{\epsilon x}(2\epsilon^3z_2\p_y+\p_z+\epsilon\p_{z_1}+\epsilon^2\p_{z_2}+\epsilon^3\p_{z_3})\rangle.
 \end{multline*}
Its Lie algebra structure is the following: the derived series of $\mathfrak{g}$ is
$\mathfrak{g}_2=\langle W_2,W_3,W_5,W_6,W_7,W_8,W_9\rangle$, $\mathfrak{g}_3=\langle W_2\rangle$,
where $\mathfrak{g}_2$ is the Heisenberg algebra again
$\mathfrak{h}=\mathfrak{h}_1\oplus\mathfrak{h}_2$ (the indices denote the grading)
given by the symplectic form on $\mathfrak{h}_1=\langle W_3,W_5,W_6,W_7,W_8,W_9\rangle$
with values in $\mathfrak{h}_2=\langle W_2\rangle$:
 \begin{gather*}
[W_3,W_7]=-2\epsilon^2W_2,\ [W_5,W_6]=2\epsilon^2W_2,\\ [W_6,W_7]=2W_2,\ [W_8,W_9]=-4\epsilon^5W_2
 \end{gather*}
(normal form is achieved by the shift of
$(W_6,W_7)$ by $\frac1{2\epsilon^2}(W_3,W_5)$). Then the 2-dimensional right extension from
$\mathfrak{h}$ to $\mathfrak{g}$ is achieved via grading element $W_4$,
$\op{ad}(W_4)|_{\mathfrak{h}_k}=k\cdot\op{id}$,
and the derivation
 $$
\op{W_1}=W_3\ot\theta_5+W_5\ot\theta_6+W_6\ot\theta_7-\epsilon W_8\ot\theta_8+\epsilon W_9\ot\theta_9.
 $$
Let us now demonstrate the sub-maximal property.

 \begin{theorem}
There exists no rank 2 distribution in $\R^6$ with symbol of general position
(i.e. growth vector being $(2,3,5,6)$) that possesses a 10-dimensional symmetry algebra.
 \end{theorem}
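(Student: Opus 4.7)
The plan is to combine Theorem \ref{symTan} with Corollary \ref{corTan} to reduce the question to a finite algebraic check inside the $11$-dimensional Tanaka algebra of the flat model $y'=(z''')^2$. Suppose, for contradiction, that a $(2,3,5,6)$-distribution $\Delta$ admits a $10$-dimensional symmetry Lie algebra $h$. Since $10 < 11$, the distribution $\Delta$ is not flat, so by Theorem \ref{symTan} there is a compatible filtration of $h$ whose associated graded algebra $\op{gr}\,h$ embeds into the Tanaka algebra $\g$ of the flat $(2,3,5,6)$-distribution. Because $\dim\op{gr}\,h=\dim h=10$, the image is a graded subalgebra of $\g$ of codimension exactly one.

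Next I would compute $\g$ explicitly. The negative part $\m=\g_{-4}\oplus\g_{-3}\oplus\g_{-2}\oplus\g_{-1}$ has dimensions $(1,2,1,2)$ read off from the growth vector $(2,3,5,6)$, and its Lie brackets are dictated by the flat model (or by the formulas in \cite{AK}). The non-negative Tanaka prolongation $\g_{\ge 0}=\g_0\oplus\g_1\oplus\dots$ must then have total dimension $11-6=5$; I would write down its graded pieces and the mixed brackets with $\m$ using the algorithmic procedure of \cite{T, AK}. Since the symbol $\m$ is the same for every $(2,3,5,6)$-distribution, the codimension-one subalgebra $\op{gr}\,h\subset\g$ necessarily contains $\m$ in full, and the only choice is a codimension-one graded $\op{ad}(\m)$-invariant subspace $h_{\ge 0}\subset\g_{\ge 0}$ closed under the internal brackets of $\g_{\ge 0}$. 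This is a short and completely algebraic list to enumerate, since $\dim\g_{\ge 0}=5$.

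For each surviving candidate $\mathfrak{h}=\m\oplus h_{\ge 0}$, Corollary \ref{corTan} tells me exactly what has to be checked: either $\mathfrak{h}$ itself is realized (via a transitive action on a $6$-manifold with $\op{ad}(\m)$-invariant complement $\Pi$ of rank $2$) as the symmetry algebra of some $(2,3,5,6)$-distribution; or there is a genuine filtered deformation $h$ of $\mathfrak{h}$ that does not embed back into $\g$ and is realized instead. Both alternatives are obstructed by cohomological data: the filtered deformations are classified by graded $H^2(\mathfrak{h},\mathfrak{h})$ in positive weights, while the compatibility with $\Pi$ generating the prescribed flag $(2,3,5,6)$ imposes additional constraints. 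I would check that for every codimension-one candidate the only filtered algebras produced either coincide with $\g$ itself (giving the flat model, contradiction with $\dim h=10$), or fail to generate the required growth, or already appear inside the $9$-dimensional family \eqref{submax26}.

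The main obstacle will be the case analysis in the last step. Although there are only a handful of codimension-one graded subalgebras of $\g$ compatible with $\m$, ruling out a genuine deformation requires computing, in each case, the relevant piece of $H^2(\mathfrak{h},\mathfrak{h})$ and matching any nonzero class against the Jacobi identity together with the non-holonomy condition on $\Pi$. Given that the explicit sub-maximal model \eqref{submax26} already exhibits a one-parameter family achieving $\dim=9$ with a rather rigid algebraic structure (Heisenberg core plus a two-dimensional extension), the expected output is that every attempted $10$-dimensional deformation collapses either upward to the flat $\g$ or downward into \eqref{submax26}, forcing the gap from $11$ straight to $9$.
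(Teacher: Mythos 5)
Your overall strategy coincides with the paper's: invoke Theorem \ref{symTan} to replace the hypothetical $10$-dimensional symmetry algebra by a graded codimension-one subalgebra of the $11$-dimensional Tanaka algebra $\g$ of the flat model (\ref{max13}), and then use Corollary \ref{corTan} to rule out filtered deformations. However, there is a genuine gap at the enumeration step. You assert that, because the symbol $\m$ is the same for all $(2,3,5,6)$-distributions, the graded subalgebra $\op{gr}h\subset\g$ must contain $\m$ in full, so that only codimension-one subspaces of $\g_{\ge0}$ need to be considered. This does not follow: containing all of $\m$ would amount to transitivity of the symmetry action, which is not known a priori (the paper's Conclusion explicitly treats homogeneity of sub-maximal models as an a posteriori observation, and notes it fails in nearby examples). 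It is also factually false for this $\g$: the Levi decomposition is $\g=\op{sl}_2+\m^8$ with $\op{sl}_2=\langle S_0,S_1,S_2\rangle$ straddling degrees $-1,0,1$, and $\langle S_1,S_2,R,Y_0,Z_j\rangle$ is a perfectly good $10$-dimensional graded subalgebra omitting $S_0\in\g_{-1}$. The correct statement (used in the paper) is that any $10$-dimensional graded subalgebra contains $\langle S_1,Y_0,Z_j\rangle$ plus a $2$-dimensional subspace of $\langle S_0,S_2,R\rangle$, so the cases are removal of $S_0$, of $S_2$, or of $R$ --- one of which lives in the negative part. Your enumeration misses this case and, more importantly, rests on a wrong structural claim.

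The second weakness is that the decisive computation is only announced, not performed, and the tool you name for it is insufficient. The paper warns that $H^2(\h,\h)$ gives only first obstructions and can be non-zero for algebras that are nonetheless rigid in the relevant sense; indeed, in the case $\h=\langle S_0,S_1,R,Y_0,Z_j\rangle$ a genuine non-trivial deformation \emph{does} exist (perturbing the spectrum of $\op{ad}(S_1)$), and it is excluded not by cohomology vanishing but by the filtration-compatibility requirement of Corollary \ref{corTan} (the associated graded algebra must embed into $\g$), which forces the perturbation parameter to zero and then, via explicit Jacobi-identity manipulations with (\ref{qwa}), restores all brackets of $\g$. In the complementary case $\h=\op{sl}_2+\m^7$ the paper's argument is a Schur's-lemma decomposition $\La^2\m^7=S^8V\oplus S^5V\oplus S^4V\oplus\varepsilon$ showing the adjoint module does not occur, hence rigidity. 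Without carrying out these two computations (or equivalents), the claim that every candidate ``collapses upward to $\g$ or downward to (\ref{submax26})'' is an expectation, not a proof.
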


To demonstrate this we will use Corollary \ref{corTan}.

 \begin{proof}
The maximal symmetry algebra, corresponding to (\ref{max13}), was calculated in \cite{AK}:
the graded Lie structure is
 \begin{multline*}
\g=\g_{-4}\oplus\g_{-3}\oplus\g_{-2}\oplus\g_{-1}\oplus\g_0\oplus\g_1\\
=\langle Z_0\rangle\oplus\langle Y_0,Z_1\rangle\oplus\langle Z_2\rangle\oplus
\langle S_0,Z_3\rangle\oplus\langle S_1,R,Z_4\rangle\oplus\langle S_2,Z_5\rangle
 \end{multline*}
and the commutators are (we list only non-trivial ones)
 \begin{gather*}
[S_0,S_1]=S_0,\ [S_0,S_2]=2S_1,\ [S_1,S_2]=S_2,\\
[S_0,Z_i]=Z_{i-1},\ [S_1,Z_i]=(i-\tfrac52)Z_i,\ [S_2,Z_i]=(i+1)(i-5)Z_{i+1},\\
[Z_0,Z_5]=2Y_0,\ [Z_1,Z_4]=-2Y_0,\ [Z_2,Z_3]=2Y_0\\
[Y_0,R]=Y_0,\ [Z_i,R]=\tfrac12Z_i.
 \end{gather*}
Its Levi decomposition is
 $$
\g=\op{sl}_2+\mathfrak{m}^8=\langle S_i\rangle\oplus\langle R,Y_0,Z_j\rangle.
 $$
It follows that any 10-dimensional graded subalgebra shall contain the 8-dimensional space
$\langle S_1,Y_0,Z_j\rangle$, to which a 2-dimensional subspace in $\langle S_0,S_2,R\rangle$ shall be added.

This can be reduced to removing from $\g$ either $R$ or $S_2$ (in fact, internally any subalgebra of
$\op{sl}_2$ is conjugated to $\langle S_0,S_1\rangle$, but in our case the semi-simple part enters with
the graded representation, and so there are more cases; however e.g. removal of $S_0$ is much simpler
and we restrict to the most complicated cases). This is a 10-dimensional subalgebra $\h\subset\g$,
but we need to check if it has a deformation in the spirit of Corollary \ref{corTan}.

The first choice $\h=\langle S_i,Y_0,Z_j\rangle$ is easy: The $\op{sl}_2$ subalgebra is rigid
upon deformation $\h\rightsquigarrow h$, as well as its module $\mathfrak{m}^7=\langle Y_0,Z_j\rangle$.
The Lie algebra structure on $h=\op{sl}_2+\mathfrak{m}^7$ is given by a choice of
$\op{sl}_2$-equivariant morphism $\La^2\mathfrak{m}^7\to h$.

Denoting by $V$ the tautological (2D) representation and by
$\varepsilon$ the trivial one, we have $\mathfrak{m}^7=S^5V\oplus\varepsilon$ and
consequently $\La^2\mathfrak{m}^7=S^8V\oplus S^5V\oplus S^4V\oplus\varepsilon$.
Notice that the adjoint representation, equivalent to $S^2V$ is not among the summands.
Thus by Schur's lemma the only morphism is the projection to $S^5V\oplus\varepsilon$
(given by two coefficients).

Therefore the only possible Lie algebra structure is given by the commutators
$[Z_i,Z_j]=\l_{ij}Y_0$ and $[Z_i,Y_0]=cZ_i$ (the brackets with $\op{sl}_2$ are the same as in $\g$).
Acting on the first equation by $\op{ad}(S_1)$ we obtain $\l_{ij}=0$ unless $i+j=5$,
and acting on it by $\op{ad}(S_0)$ we get $\l_{0,5}=-\l_{1,4}=\l_{2,3}$ and this can be normalized to
2 (it is non-zero by the deformation constraints). Now the Jacobi identity for the triple $(Z_i,Z_{5-i},Z_k)$
yields $c=0$ and we recover the structure of $\h$. So no non-trivial deformation is possible

The second choice  $\h=\langle S_0,S_1,Y_0,Z_j,R\rangle$ is more complicated because the semisimple
part is gone. Let us start by calculating the usual deformation.
The operator $\op{ad}(S_1)$ has spectrum $\{-1,0,0,0,\pm\frac52,\pm\frac32,\pm\frac12\}$.
The simple eigenvalues will be perturbed and we can normalize $-1$ to be fixed.
This restores the solvable subalgebra $\langle S_0,S_1\rangle$.

Next the Jacobi identity yields $[S_1,[S_0,Z_i]]=(\l_i-1)[S_0,Z_i]$ provided $[S_1,Z_i]=\l_iZ_i$.
This implies $[S_0,Z_i]=Z_{i-1}$ and $[S_1,Z_i]=(i-\frac52+\l)Z_i$ for some $\l$.
We define $Y_0=\frac12[Z_0,Z_5]$ and then calculate if $\l\ne0$ but small: $[S_0,Y_0]=0$,
$[S_1,Y_0]=2\l Y_0$. We can proceed and get in this way a non-trivial deformation,
but it will not be of the type described in Corollary \ref{corTan}.

So let us use the assumption on filtration, namely that after passing to grading we have a
monomorphism $\h\to\g$ (the filtration in $\g$ is $F_i=\oplus_{k\ge i}\g_k$). This
yields $\l=0$, and then it is not difficult to deduce that the brackets for $[Z_i,Z_j]$ are
the same as in $\g$.

Indeed, since $\op{ad}(S_1)[Z_i,Z_j]=(i+j-5)[Z_i,Z_j]$ the latter commutator can be non-zero
only for $i+j=4$ or $5$. Applying $\op{ad}(S_0)$ to $[Z_{i+1},Z_j]=0$ with $i+j=5$ we get
$[Z_i,Z_j]=-[Z_{i+1},Z_{j-1}]$, from where $[Z_i,Z_{5-i}]=(-1)^i2Y_0$.
Applying  $\op{ad}(S_0)$ to this starting from $i=0$ we get $[Z_i,Z_{4-i}]=0$.

Now by Leibniz rule it follows that $[S_i,Y_0]=0$, and so we  restore all commutators relations
from $\g$ for the subalgebra $\langle S_0,S_1,Y_0,Z_j\rangle$.

It remains to add $R$. By filtration reason the following relations should hold for some
constant coefficients:
 \begin{equation}\label{qwa}
[S_0,R]=\a Z_5+\b S_1+\gamma  R+\d Z_4,\ \ [S_1,R]=\z Z_5,\ \ [Z_5,R]=0.
 \end{equation}
Applying to the last equality $\op{ad}(S_0)$ several times (and using the first equality)
we successively get formulae for $[Z_i,R]$, which we do not reproduce all.
When $i=-1$ ($Z_{-1}=0$) we obtain
 \begin{multline*}
-3\gamma Z_0-\tfrac52\gamma(7\b-3\gamma)Z_1+5\gamma^2(7\b-\gamma)Z_2-
\tfrac32\gamma^3(21\b-5\gamma)\\
+\gamma^4(14\b-3\gamma)Z_4-\tfrac12\gamma^5(5\b-\gamma)Z_5-2(\a+\d\gamma)Y_0=0,
 \end{multline*}
which implies $\a=\gamma=0$.

Now applying $\op{ad}(S_0)$ to the middle equality in (\ref{qwa}) we obtain
$\b S_1+\frac52\delta Z_4=\z Z_4$, so that $\b=0$, $\z=\frac52\d$. The last number can be nonzero,
but it is removed by the change $R\mapsto R-\frac25\z Z_5$. This restores all the commutators
and so we get $h=\h\subset\g$. Thus no non-trivial deformation exists.
 \end{proof}

Thus we see that the rank 2 distribution in $\R^6$ corresponding to (\ref   {submax26})
actually represents a sub-maximal case. The structure of maximal symmetry
is unique according to \cite{AK,DZ}, but this is not so for the sub-maximal case;
here are some other models representing rank 2 distributions in $\R^6$ with
9-dimensional symmetry algebra:
 $$
y'=(z''')^2+\epsilon^2(z')^2\qquad \text{ and also }\qquad y'=(z''')^2+\epsilon^2z^2.
 $$

Another interesting series of symmetric models corresponding to rank 2 distributions in $\R^6$ is
 \begin{equation}\label{uv3m}
y'=(z''')^m.
 \end{equation}
For generic $m$ the symmetry algebra is 7-dimensional, but for some values like $\frac12$ and $\frac13$
it is 8-dimensional, and these special systems together with elliptic and hyperbolic Monge equations from
\cite{AK} yield sub-submaximal symmetric models of 2-distributions in $\R^6$.

In higher dimensions the maximally symmetric Monge equations are given by
 $$
y'=(z^{(n)})^2.
 $$
They represent rank 2 distributions in $\R^{n+3}$ with the symmetry algebra of dimension $2n+5$.
The sub-maximal case is realized by any of the equations
 $$
y'=(z^{(n)})^2+\epsilon(z^{(j)})^2
 $$
with $0\le j<n$; its symmetry algebra is $(2n+3)$-dimensional. Thus in this case the gap is 2.

For other equations, like elliptic and hyperbolic Monge equations \cite{AK}, the gap takes the minimal
possible value 1.

\section{Conclusion}\label{S5}

We have observed that the gap phenomenon exists (i.e. the lacune exceeds 1) if  the
maximal symmetry algebra has a semi-simple part; the larger is the dimension of the latter
the larger is expected the gap.

For geometric structures associated to distributions (in which case the maximal
symmetry algebra is graded and is given by the Tanaka theory) we have been
able to explain the phenomenon and presented a tool to perceive the gap.

Also in the latter case the maximal symmetry model (as well as its algebra)
is unique, though it is not true in general (e.g. for Riemannian metric with maximal
amount of Killing fields). Sub-symmetric models are seldom unique.

Remark also that maximal symmetry algebras act transitively on 
the manifold where the structure lives (in other words the maximal models are homogeneous).
This still holds for the sub-maximal models related to distributions,
e.g. in the discussed cases (\ref{submax25}) and (\ref{submax26}).
In general sub-maximal situation even this is not always true
(compare again the case of local Killing fields on surfaces).
In the lower-dimensional case like sub-submaximal (\ref{uv3m}), and also for Cartan's involutive systems with 6-dimensional algebras \cite[p.170]{C}, this property fails.

Understanding of maximal and sub-maximal symmetric models in general still remains an interesting open problem.


\end{document}